\documentclass[12pt]{amsart}

\usepackage{amsmath,amssymb,enumerate,mathtools}
\newtheorem{theorem}{Theorem}[section]
\newtheorem{claim}{}[theorem]
\newtheorem{lemma}[theorem]{Lemma}

\newtheorem{proposition}[theorem]{Proposition}
\newtheorem{corollary}[theorem]{Corollary}

\theoremstyle{definition}

\newcommand{\bF}{\mathbb F}
\newcommand{\bR}{\mathbb R}
\newcommand{\bZ}{\mathbb Z}

\newcommand{\cC}{\mathcal{C}}

\newcommand{\cM}{\mathcal{M}}

\newcommand{\Fprime}{\mathbb{F}_{\mathrm{prime}}}

\DeclareMathOperator{\rowspace}{rowspace}

\DeclareMathOperator{\dist}{dist}

\DeclareMathOperator{\GF}{GF}

\DeclareMathOperator{\sign}{sign}

\newcommand{\del}{\!\setminus\!}
\newcommand{\con}{/}
\begin{document}
\sloppy

\title[Asymptotically Good Codes]{On the existence of asymptotically good linear codes in minor-closed classes}

\author{Peter Nelson, Stefan H.M. van Zwam}

\maketitle

\begin{abstract}
  Let $\mathcal{C} = (C_1, C_2, \ldots)$ be a sequence of codes such that each $C_i$ is a linear $[n_i,k_i,d_i]$-code over some fixed finite field $\bF$, where $n_i$ is the length of the codewords, $k_i$ is the dimension, and $d_i$ is the minimum distance. We say that $\mathcal{C}$ is \emph{asymptotically good} if, for some $\varepsilon > 0$ and for all $i$, $n_i \geq i$, $k_i/n_i \geq \varepsilon$, and $d_i/n_i \geq \varepsilon$. Sequences of asymptotically good codes exist. We prove that if $\mathcal{C}$ is a class of $\GF(p^n)$-linear codes (where $p$ is prime and $n \geq 1$), closed under puncturing and shortening, and if $\mathcal{C}$ contains an asymptotically good sequence, then $\mathcal{C}$ must contain \emph{all} $\GF(p)$-linear codes. Our proof relies on a powerful new result from matroid structure theory.
\end{abstract}

\section{Introduction}

  For a finite field $\bF$, let $\Fprime$ denote the unique subfield of $\bF$ of prime order. For a linear code $C$, denote the length of $C$ by $n_C$, the dimension of $C$ by $k_C$, and the minimum Hamming distance of $C$ by $d_C$. In short, $C$ is an $[n_C, k_C, d_C]$ code. A class $\cC$ of codes is \emph{asymptotically good} if there exists $\varepsilon > 0$ such that for every $n \in \bZ^+$ there is a code $C \in \cC$ of length $n_C \geq n$ satisfying $k_C \ge \varepsilon n_C$ and $d_C \ge \varepsilon n_C$. 
  
  For every finite field $\bF$, the class of linear codes over $\bF$ is asymptotically good, as suitable random codes have nonvanishing rate and minimum distance. Our main result can be seen as a converse to this statement. Our result involves two standard operations on linear codes. Given a code $C$, the \emph{puncturing} of $C$ at $i$ is the code obtained from $C$ by removing the $i$th coordinate from each word. The \emph{shortening} of $C$ at $i$ is the code obtained from $C$ by selecting only the codewords of $C$ having a $0$ in position $i$, and then puncturing the resulting code at $i$.
  
  \begin{theorem}\label{main}
    Let $\bF$ be a finite field. If $\cC$ is an asymptotically good class of linear codes over $\bF$, then every linear code $C'$ over $\Fprime$ can be obtained from a code $C \in \cC$ by a sequence of puncturings and shortenings. 
  \end{theorem}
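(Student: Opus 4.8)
The plan is to translate the statement into matroid language, invoke a structure theorem for minor-closed classes of $\GF(q)$-representable matroids as a black box, and then rule out each ``degenerate'' structural outcome using the strong lower bound on cogirth that asymptotic goodness supplies. Throughout, $q = p^n$, so $\Fprime = \GF(p)$, and we regard an $\Fprime$-linear code as a $\bF$-linear code by extension of scalars.

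Since we may replace $\cC$ by the class of all codes obtainable from its members by puncturing and shortening --- still asymptotically good, and now minor-closed --- we assume $\cC$ is minor-closed; the conclusion then becomes that $\cC$ contains every $\Fprime$-linear code. A linear $[n_C, k_C, d_C]$ code $C$ over $\bF$ is the row space of a rank-$k_C$ matrix $A$, and the column matroid $M[A]$ has $r(M[A]) = k_C$, $|E(M[A])| = n_C$, and cogirth equal to $d_C$ (the least size of a cocircuit, i.e.\ $n_C$ minus the size of a largest hyperplane). Puncturing a coordinate deletes that element and shortening contracts it, so the class $\cM$ of $\bF$-represented matroids arising from codes in $\cC$ --- the representing matrices recorded up to row operations and reordering of columns --- is minor-closed, and asymptotic goodness with constant $\elem$ says exactly that for every $r$ there is $M \in \cM$ with $r(M) \ge r$, with $|E(M)| \le r(M)/\elem$ (so $|E(M)| = \Theta(r(M))$: the class is \emph{linearly dense}), and with cogirth at least $\elem\,|E(M)|$. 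A short check shows it now suffices to prove that $\cM$ contains every $\Fprime$-represented matroid.

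Here the structure theory enters. Since contraction never decreases cogirth, a judicious sequence of contractions raises the vertical connectivity of a linearly dense, large-cogirth matroid to any prescribed level while keeping $r(M) = \Theta(|E(M)|) \to \infty$, so we may take the witnesses of asymptotic goodness to be vertically $k$-connected for $k$ as large as we like. The structure theorem for minor-closed classes of $\GF(q)$-representable matroids then yields a dichotomy: either $\cM$ contains \emph{every} $\GF(q)$-representable matroid --- and we are done, since every $\GF(p)$-representable matroid is $\GF(q)$-representable --- or, up to bounded-rank perturbations and bounded blow-ups, each highly connected witness has bounded rank, or is a frame matroid, or is the dual of a frame matroid, or is representable over a proper subfield $\GF(p^j)$ of $\GF(q)$. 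The first three possibilities are eliminated by cogirth: bounded rank is impossible since $r(M) \to \infty$; a frame matroid with $\Theta(r)$ elements has, by averaging over its frame, a joint meeting only $O(1)$ elements and hence a cocircuit of bounded size; and a cographic-type matroid with $\Theta(r)$ elements has a short cycle in its underlying graph and hence a cocircuit of size $O(\log|E(M)|)$. In each of these three cases one must argue that a cocircuit of size $o(|E(M)|)$ survives from the structured matroid into $M$ itself --- this is where the perturbation must be controlled --- contradicting cogirth $\ge \elem\,|E(M)|$. The surviving possibility, representability over a proper subfield, is handled by induction on $n$: when $n = 1$ there is no proper subfield, so $\cM$ contains every $\GF(p)$-representable matroid; when $n \ge 2$, the subfield case produces within $\cM$ arbitrarily large $\GF(p^j)$-representable minors that are still linearly dense and of cogirth linear in their size, so the $\GF(p^j)$-representable members of $\cM$ form an asymptotically good minor-closed class of $\GF(p^j)$-represented matroids, to which the induction hypothesis applies.

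I expect the two genuinely hard points to be the connectivity reduction --- increasing vertical connectivity without spoiling the linear lower bound on cogirth --- and the handling of the perturbations and blow-ups supplied by the structure theorem: a perturbation can break apart a large parallel class and thereby inflate cogirth, so to obtain the contradictions above one must locate the small cocircuits inside a minor (obtained by contracting a suitable bounded-rank flat) on which the perturbation acts trivially, and then use that contraction cannot decrease cogirth. The rest is bookkeeping, provided the cited structure theorem is applied in a form that tracks represented matroids, and hence actual codes, rather than abstract matroids.
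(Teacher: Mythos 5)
Your overall architecture is the paper's: pass to represented matroids, boost vertical connectivity, invoke the Geelen--Gerards--Whittle structure theorem, and kill the frame and coframe outcomes by playing graph-density/short-cycle arguments against the linear lower bound on cogirth. But two of your steps have genuine gaps, and they sit exactly where the paper does its real work. The first is the connectivity reduction. You assert that a ``judicious sequence of contractions'' raises vertical connectivity to any prescribed level while keeping $r(M) = \Theta(|E(M)|)$ and cogirth linear in size. Contraction indeed never decreases cogirth, but it can destroy the lower bound $r(M) \ge \varepsilon |M|$ (contracting $X$ drops the rank by $r_M(X)$ while the ground set shrinks only by $|X|$), and you give no argument that iterating such contractions ever terminates in a vertically $k$-connected matroid that is still large and dense; no such statement is proved in the paper either. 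The paper's Lemma~\ref{connreduction} takes a different route: it chooses the good sequence extremally --- cogirth density $\beta$ within a factor $3/2$ of the supremum of achievable cogirth densities, and rank density $\alpha$ close to the supremum achievable for that $\beta$ --- and shows that all but finitely many members of such a sequence are \emph{already} vertically $t$-connected, because a low-order vertical separation would produce (by contracting one side) a minor in $\cM$ whose parameters beat one of the suprema. You flag this step as hard, but as proposed it is a missing lemma, not a routine reduction.

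The second issue is your form of the structure theorem and the subfield induction. The paper applies the theorem in a form keyed to the hypothesis that $\cM$ omits some projective geometry over the prime field $\Fprime$; the only outcomes are then $\dist(M,N) \le k$ or $\dist(M^*,N) \le k$ for an $\bF$-represented frame matroid $N$ --- the subfield outcome cannot arise, because a class of matroids representable over any subfield of $\bF$ still contains every projective geometry over $\GF(p)$. Your induction on $n$ is therefore unnecessary, and as written it has a gap: the structure theorem makes highly connected members of $\cM$ bounded perturbations of $\GF(p^j)$-representable matroids, but those structured matroids need not lie in $\cM$, so you cannot conclude that the $\GF(p^j)$-representable members of $\cM$ form an asymptotically good minor-closed class to feed the induction. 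Finally, in the frame/coframe eliminations, two details you only gesture at are essential: a short cycle of the graph of a frame matroid need not be dependent --- only balanced cycles and theta-like subgraphs are --- which the paper handles by passing to a derived graph $G^+$ on $V(G) \times \bF^\times$ built from $q-1$ copies of a spanning tree; and robustness under the rank-$\le k$ perturbation is obtained not by restricting to a minor on which the perturbation ``acts trivially'' (which would need its own argument and could lose density), but by producing $k+1$ pairwise disjoint short cycles (rank deficiency exceeding $k$) in the one case, and $2(k+1)$ low-degree vertices in the other. With those repairs your outline matches the paper's proof; without them the argument does not go through.
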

  
  In other words, the only asymptotically good classes of $\bF$-linear codes that are closed under puncturings and shortenings are those that contain all linear codes over some field. This result was conjectured by Geelen, Gerards, and Whittle [\ref{ggw}]. A restatement of the above theorem for prime fields is the following: 
\begin{theorem}
  Let $\bF$ be a field of prime order. If $\cC$ is a proper subclass of the linear codes over $\bF$ that is closed under puncturing and shortening, then $\cC$ is not asymptotically good. 
\end{theorem}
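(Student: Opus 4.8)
The plan is to reformulate everything in terms of matroids and then apply recent matroid structure theory. Fix a generator matrix for each code; up to monomial equivalence a $\GF(p)$-linear code $C$ of length $n_C$ is determined by the matroid $M(C)$ on $\{1,\dots,n_C\}$ represented by its columns, and puncturing and shortening correspond exactly to matroid deletion and contraction. Hence, if $\cC$ is closed under puncturing and shortening, then $\cM:=\{M(C):C\in\cC\}$ is a minor-closed class of $\GF(p)$-representable matroids, and $\cC$ is a proper subclass precisely when $\cM$ omits some $\GF(p)$-representable matroid. Two standard facts finish the dictionary: $k_C=\rank M(C)$, and the minimum-weight nonzero codewords of $C$ have supports equal to the cocircuits of $M(C)$, so $d_C=\operatorname{cogirth} M(C)$. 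Thus ``$\cC$ asymptotically good'' translates to: there is $\varepsilon>0$ such that $\cM$ contains matroids $M$ with $|E(M)|$ arbitrarily large, $\rank M\ge\varepsilon|E(M)|$, and $\operatorname{cogirth} M\ge\varepsilon|E(M)|$. I will prove the contrapositive: any minor-closed class $\cM$ of $\GF(p)$-representable matroids containing such matroids must contain \emph{every} $\GF(p)$-representable matroid. It suffices to show $\cM$ contains $\PG(r-1,p)$ for every $r$, since the simplification of any rank-$r$ $\GF(p)$-representable matroid is a restriction of $\PG(r-1,p)$, and loops and parallel elements are recovered by contracting points of a slightly larger projective geometry.

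Suppose for contradiction that $\cM$ omits $\PG(r_0-1,p)$ for some $r_0$. Each member of $\cM$ is $\GF(p)$-representable, hence has no $U_{2,p+2}$-minor, so the Growth Rate Theorem applies to $\cM$. Its exponential case would force $\cM$ to contain all $\GF(q)$-representable matroids for some prime power $q$; as $\cM$ consists of $\GF(p)$-representable matroids we must have $q=p$, contradicting the omission of $\PG(r_0-1,p)$. Therefore $\cM$ has at most quadratic density: there is a constant $c$ with at most $cr^2$ points in any simple rank-$r$ member. Now I invoke the matroid structure theorem of Geelen, Gerards and Whittle, which for a proper minor-closed class of $\GF(p)$-representable matroids of at most quadratic density asserts that every member of sufficiently large branch-width is, after deleting or contracting boundedly many elements and applying boundedly many rank-one perturbations, either a matroid of bounded rank, a frame matroid over $\GF(p)$, or the dual of a frame matroid over $\GF(p)$. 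To apply this to a witnessing matroid $M$, I first pass to a single connected component: $M$ has no coloop (a coloop is a cocircuit of size $1$), so $M$ is a direct sum of at most $1/\varepsilon$ connected matroids, each inheriting the two density inequalities and at least one still having rank a positive fraction of its size. I then need that such a matroid already has large branch-width — roughly, a small balanced separation of a matroid of bounded density would, via submodularity of the connectivity function, produce a flat of bounded rank with too small a complement, contradicting large cogirth. Converting the global cogirth hypothesis into the local branch-width hypothesis the structure theorem requires is the step I expect to be the main obstacle.

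The contradiction is then squeezed out of the structure, after noting that deleting, contracting, or perturbing boundedly many elements changes $\rank$, $|E|$, and cogirth by a bounded amount, so the inequalities survive once $|E(M)|$ is large. The bounded-rank case is impossible since $\rank M\to\infty$. If $M$ is (up to the bounded corrections) a frame matroid $F$ over $\GF(p)$ with frame $B$, then for each $b\in B$ the set $N_b$ of elements of $F$ incident to $b$ has $E(F)\setminus N_b$ spanned by $B\setminus\{b\}$, so $N_b$ contains a cocircuit of $F$ and hence $|N_b|\ge\operatorname{cogirth} F\ge\varepsilon'|E(F)|$ for a constant $\varepsilon'>0$; since each element of a frame matroid is incident to at most two frame elements, $|B|\,\varepsilon'|E(F)|\le\sum_{b\in B}|N_b|\le 2|E(F)|$, so $|B|=\rank F\le 2/\varepsilon'$, contradicting $\rank F\to\infty$. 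If instead $M^{*}$ is (up to the bounded corrections) a frame matroid $F'$ over $\GF(p)$, then the girth of $F'$ equals the cogirth of $M$, hence is at least $\varepsilon'|E(F')|$; a Moore-type bound shows a frame matroid whose girth is a positive fraction of its size has underlying biased graph essentially a disjoint union of long paths and long cycles, and such a matroid has corank $O(1/\varepsilon')$, contradicting $\operatorname{corank} F'=\rank M\to\infty$. Every outcome is contradictory, so no such $\cM$ exists: a proper puncturing/shortening-closed class of $\GF(p)$-linear codes cannot be asymptotically good. \QED
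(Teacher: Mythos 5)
Your translation to matroids and your overall target (a minor-closed class of $\GF(p)$-representable matroids containing a good sequence must contain all projective geometries over $\GF(p)$) match the paper, but the proposal has a genuine gap at exactly the step you flag as "the main obstacle," and it is not a minor technicality --- it is the paper's central new contribution. Passing to a connected component cannot give you the hypothesis of the structure theorem: a direct sum or $2$-sum of two asymptotically good matroids is still good (its cocircuits are large and its rank is a linear fraction of its size) while having vertical connectivity $1$ or $2$, so no argument about a \emph{single} good matroid can force high connectivity. Moreover, the statement you invoke is not the published Geelen--Gerards--Whittle theorem: their result (Theorem~\ref{big} here) applies to vertically highly connected members of the class, not to members of large branch-width, and for merely large branch-width the "whole matroid is a bounded perturbation of frame/coframe/bounded rank" conclusion is false (e.g.\ a $2$-sum of a large frame piece with a large coframe piece). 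The paper resolves this at the level of the \emph{class}, not the matroid: Lemma~\ref{connreduction} shows that the vertically $t$-connected members of an asymptotically good minor-closed class again form an asymptotically good class, by choosing the cogirth density $\beta$ near its supremum and the rank density $\alpha$ near its supremum given $\beta$, and showing that a low-order vertical separation of a good matroid would yield a contraction minor that is "too good," contradicting extremality. This use of minor-closure and extremal parameters is the missing idea in your plan.

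There is a second, quieter gap in your endgame: you assert that boundedly many perturbations change the cogirth by a bounded amount, and in the coframe case that the girth of $F'$ "equals" the cogirth of $M$. Neither is true: a single elementary lift or projection can change girth or cogirth arbitrarily (one projection can take a matroid with no circuits to one with a loop), so the inequalities do not simply survive the bounded corrections. What does survive is the rank function, up to an additive $k$: $|r_M(Y)-r_N(Y)|\le k$ for all $Y$. This is why the paper never compares girths across the perturbation but instead manufactures, inside the frame matroid $N$, a small set $X$ with $r_N(X)\le |X|-(k+1)$ (via $k+1$ disjoint short cycles in the covering graph $G^+$ in Corollary~\ref{framematroidrankdeficiency}, or via $2(k+1)$ low-degree vertices in Lemma~\ref{nearcoframe}), so that $r_M(X)<|X|$ and $M$ itself has a small circuit. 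Your double-counting of frame incidences and your Moore-bound sketch are in the right spirit (they parallel Lemmas~\ref{nearcoframe} and~\ref{nearframe} respectively), but as written they act on the wrong matroid, and the balanced/unbalanced distinction in the biased graph means "large girth forces a union of long paths and cycles" needs the covering-graph argument, not just the graphic Moore bound.
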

  When $|\bF| = 2$, this substantially generalises results of Kashyap [\ref{kashyap}] which show that the classes of graphic binary codes, as well as a slightly larger class of `almost-graphic' codes, are not asymptotically good.
  
  Our proof makes fundamental use of a deep theorem in structural matroid theory recently announced by Geelen, Gerards and Whittle. This theorem is one of many outcomes of the `matroid minors project', the result of more than a decade of work generalising Robertson and Seymour's graph minors structure theorem [\ref{rs}] to matroids representable over finite fields. While this theorem has now been stated in print [\ref{ggw}], its proof will stretch to hundreds of pages and has yet to be published. The reader should be aware of this contingency; for a more detailed discussion see [\ref{ggw}].

\section{Preliminaries}
  Since the ideas in our proof are matroidal, we adopt the terminology of matroid theory. The correspondence between linear codes and matroids is fairly direct even when a matroid is defined in the usual way by its ground set and rank function, but for convenience we will deal with a flavour of matroid whose definition coincides exactly with that of a linear code.
  
  \subsection*{Represented matroids} If $\bF$ is a field, then an \emph{$\bF$-represented matroid} is a pair $M = (E,U)$, where $E$ is a finite set and $U$ is a subspace of $\bF^E$. We often omit `$\bF$-represented' when the context is clear. We write $|M|$ for $|E|$.
  
  If $A$ is an $\bF$-matrix with column set $E$ such that $\rowspace(A) = U$, then we write $M = M(A)$; we call $A$ a \emph{generator matrix} for $M$ and say that $A$ \emph{generates} $M$. For $X \subseteq E$ we write $u|X$ for the restriction of the vector $u$ to those coordinates indexed by $X$, we write $U|X$ for the space $\{u|X: u \in U\}$, and we write $r_M(X)$ for the dimension of $U|X$. We denote $r_M(E)$ simply by $r(M)$. We call $r_M$ the \emph{rank function} of $M$.

If $C$ is a $q$-ary $[n,k,d]$ code, then $M = (\{1,\ldots, n\}, C)$ is a matroid with $|M| = n$ and $r(M) = k$. The rank function simply gives the dimension of this code and of all its puncturings.
  
  If $A_2$ is an $\bF$-matrix obtained from an $\bF$-matrix $A_1$ by nonzero column scalings, then $M(A_1)$ and $M(A_2)$ are formally distinct but share the same rank function. We give a name to this equivalence: two $\bF$-represented matroids $(E,U_1)$ and $(E,U_2)$ are \emph{projectively equivalent} if $U_2 = \{uD: u \in U_1\}$ for some nonsingular diagonal matrix $D$. 
  
  \subsection*{Matroid terminology} Matroid duality coincides with linear code duality. The \emph{dual matroid} $M^*$ of an $\bF$-represented matroid $M = (E,U)$ is defined to be $(E,U^{\perp})$, where $U^\perp$ denotes the orthogonal complement of $U$. For $X \subseteq E$ we write $M \del X$ for $(E-X,U|(E-X))$ and $M \con X$ for $(M^* \del X)^*$; these are the matroids obtained from $M$ by \emph{deletion} and \emph{contraction} of $X$ respectively; these operations correspond to puncturing and shortening of codes. If $N$ is an $\bF$-represented matroid that is projectively equivalent to $M \con C \del D$ for some disjoint subsets $C$ and $D$ of $E$, then we say $N$ is a \emph{minor} of $M$. A class $\cM$ of matroids is \emph{minor-closed} if $\cM$ is closed under taking minors and isomorphism. 
    
   A set $X \subseteq E$ is \emph{spanning} in $M$ if $r_M(X) = r(M)$ and \emph{independent} in $M$ if $r_M(X) = |X|$. If $X$ is not independent then $X$ is \emph{dependent}. A \emph{circuit} of $M$ is a minimal dependent set and a \emph{cocircuit} of $M$ is a minimal dependent set of $M^*$, or equivalently a minimal set $C$ satisfying $r(M \del C) < r(M)$. For a matroid $M$, we write $g(M)$ for the size of a smallest circuit of $M$, also called the \emph{girth} of $M$.

If $C$ is a $q$-ary $[n,k,d]$ code, then $M = (\{1,\ldots, n\}, C)$ is a matroid with $g(M^*) = d$. For that reason we sometimes write $d(M)$ for $g(M^*)$.
   
  \subsection*{Connectivity}
  A notion fundamental in matroid theory that arises in our proof is that of connectivity. Informally, a matroid has low connectivity if it can be obtained by `gluing' two smaller matroids together on a low-dimensional subspace. There are many notions of matroid connectivity but we just need one; for $t \in \bZ^+$, a matroid $M = (E,U)$ is \emph{vertically $t$-connected} if, for every partition $(A,B)$ of $E$ satisfying $r_M(A) + r_M(B) < r(M) + t - 1$, either $A$ or $B$ is spanning in $M$. For instance, $M$ is vertically 2-connected if and only if $M$ cannot be written as the direct sum of two positive-rank matroids. Note that other authors often use a more restrictive definition of ``vertically $t$-connected,'' which implies ours.
   
  \subsection*{Frame matroids}
   An \emph{$\bF$-frame matrix} is an $\bF$-matrix in which every column has at most two nonzero entries, and an \emph{$\bF$-represented frame matroid} is a matroid having an $\bF$-frame matrix as a generator matrix. For a group $\Gamma$, a \emph{$\Gamma$-labelled digraph} is a pair $(G,\Sigma)$, where $G = (V,E)$ is a directed graph (allowing loops and multiple edges) and $\Sigma: E \to \Gamma$ is an assignment of a label in $\Gamma$ to every arc of $G$. There is a well-known and natural correspondence between $\bF$-frame matroids and $\bF^{\times}$-labelled digraphs; a full treatment is given in [\ref{ox2}] and a reader familiar with these concepts can skip this subsection, where we just give the minimum definitions and observations we will need.  
   
   If $A$ is an $\bF$-frame matrix with row set $V$ and column set $E$, then a \emph{graph representation} of $A$ is an $\bF^{\times}$-labelled digraph $(G,\Sigma)$, where $G = (V,E)$ and $(G,\Sigma)$ satisfies the following conditions: 
  \begin{itemize}
    \item If $A[e]$ has two nonzero entries in rows $x$ and $y$, then $e$ is an arc of $G$ from $x$ to $y$ with label $-A[e,x]A[e,y]^{-1}$ or an arc of $G$ from $y$ to $x$ with label $-A[e,y]A[e,x]^{-1}$, and 
    \item If $A[e]$ has exactly one nonzero entry, then $e$ is a loop of $G$ at $x$ with arbitrary label in $\bF^\times - \{1\}$. 
    \item If $A[e] = 0$, then $e$ is a loop of $G$ with label $1$. 
   \end{itemize}
   
   It is clear that one frame matrix may have many graph representations, and that graph representations always exist unless $|\bF| = 2$, where a frame matrix having a column with exactly one nonzero entry has no graph representations since $\bF^{\times}-\{1\}$ is empty. However, appending a `parity' row to a binary frame matrix yields another row-equivalent frame matrix where every column has even support. Since we can remove redundant rows from an arbitrary frame matrix to still have a frame matrix and append a single row in this way in the binary case, we have the following statement, which we apply freely.
   
   \begin{proposition}\label{frameexist}
    If $M$ is an $\bF$-represented frame matroid, then there is a generator matrix $A$ of $M$ having a graph representation and at most $r(M) + 1$ rows.
   \end{proposition}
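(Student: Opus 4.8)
The plan is to start from an arbitrary $\bF$-frame generator matrix of $M$, delete redundant rows to cut it down to exactly $r(M)$ rows without disturbing the row space, and then — only in the case $|\bF| = 2$ — append a single ``parity'' row to get rid of the columns with exactly one nonzero entry, which (as observed just above) are the sole obstruction to the existence of a graph representation.

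First I would fix a frame matrix $A_0$ with $\rowspace(A_0) = U$, where $M = (E, U)$. Since the rows of $A_0$ span the $r(M)$-dimensional space $U$, some $r(M)$ of them form a basis of $U$; let $A_1$ be the submatrix of $A_0$ on those rows. Deleting rows can only remove nonzero entries from columns, so $A_1$ is again a frame matrix, and $\rowspace(A_1) = U$, so $M(A_1) = M$. Thus $A_1$ is a frame generator matrix of $M$ with exactly $r(M)$ rows.

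Next I would split into cases. If $|\bF| > 2$, then $A_1$ already admits a graph representation — a column with exactly one nonzero entry can be realised as a loop whose label is any element of $\bF^\times \setminus \{1\}$, and this set is nonempty — so $A = A_1$ works and has $r(M) \le r(M) + 1$ rows. If $|\bF| = 2$, I would form $A_2$ from $A_1$ by appending one new row $z$ with $z_e = \sum_v (A_1)_{v,e}$, the sum taken over all rows $v$ of $A_1$ and computed in $\bF_2$. Since $z$ is a linear combination of the rows of $A_1$, we have $\rowspace(A_2) = U$ and $M(A_2) = M$; moreover every column of $A_2$ now has even support, i.e.\ support of size $0$ or $2$, so $A_2$ is a frame matrix with no column of support exactly one. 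Hence $A_2$ admits a graph representation (columns with two nonzero entries become arcs, all-zero columns become loops labelled $1$), and it has $r(M) + 1$ rows. Either way we reach the claimed conclusion.

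The statement is largely bookkeeping, and the only steps that need genuine care are checking that both operations — deleting the linearly dependent rows and appending the parity row — leave the row space, hence the represented matroid, unchanged; for the parity row this is precisely where characteristic $2$ enters. I would also dispose of the degenerate case $r(M) = 0$ separately, taking $A$ to be the empty matrix (or a single zero row when $|\bF| = 2$), which still satisfies the bound.
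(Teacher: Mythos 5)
Your proof is correct and follows exactly the route the paper sketches just before the proposition: discard redundant rows of an arbitrary frame generator matrix to reach $r(M)$ rows (which stays a frame matrix and keeps the row space), and in the binary case append the parity row so that every column has even support, removing the only obstruction to a graph representation. Nothing further is needed.
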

   
   A \emph{cycle} or \emph{path} of a digraph $G$ will denote any cycle or path of the underlying undirected graph of $G$. Let $C$ be a cycle of $G$, and $v_1,e_1,v_2,e_2,\dotsc,e_{k-1},v_k=v_1$ be a corresponding alternating sequence of vertices and arcs of $G$ (there are two choices for this sequence). Let $C^+$ be the set of $e_i$ such that $e_i$ is directed from $v_i$ to $v_{i+1}$ in $G$, and $C^-$ be the set of all other $e_i$. The \emph{sign} of $C$ (relative to this sequence) is $\prod_{e \in C^+} \Sigma(e) \prod_{e\in C^-} \Sigma(e)^{-1}$. We say that $C$ is a \emph{balanced} cycle of $(G,\Sigma)$ if $\sign(C) = 1$. Note that a cycle being balanced does not depend on the choice of sequence of vertices and edges.
   There is a well-known characterisation of the set of circuits of $M(A)$ in terms of the balanced and unbalanced cycles of $(G,\Sigma)$, but we will just use the following weaker statement, which is fairly straightforward to check by considering linear dependencies in the columns of $A[C]$. 
     
   \begin{proposition}\label{bicycledependent}
    If $(G,\Sigma)$ is a graph representation of an $\bF$-frame matrix $A$ and $C$ is a balanced cycle of $(G,\Sigma)$ or a connected subgraph of $G$ of minimum degree 2 which is not a cycle of $G$, then $C$ is dependent in $M(A)$. 
   \end{proposition}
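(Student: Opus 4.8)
The plan is to treat the two alternatives in the hypothesis in turn, in each case exhibiting a nontrivial linear dependence among the columns of $A$ indexed by the edges of $C$; equivalently, I will show $r_M(E(C)) < |E(C)|$, where $E(C)$ and $V(C)$ denote the edge set and vertex set of the subgraph $C$. The single fact underlying both cases is that, by the definition of a graph representation, each column $A[e]$ is supported only on the rows corresponding to the endpoints of $e$ in $G$; hence every column indexed by an edge of $C$ is supported within $V(C)$, so $r_M(E(C)) = \rank(A[E(C)]) \le |V(C)|$.

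The case where $C$ is a connected subgraph of minimum degree at least $2$ that is not a cycle is then pure counting. By the handshake lemma, $2|E(C)| = \sum_{v \in V(C)} \deg_C(v) \ge 2|V(C)|$, with equality forcing every vertex of $C$ to have degree exactly $2$ and hence, by connectivity, forcing $C$ to be a cycle; since $C$ is not a cycle we get $|E(C)| \ge |V(C)| + 1 > r_M(E(C))$, so $E(C)$ is dependent. This case is routine.

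The substantive case is a balanced cycle $C$, where I would construct the dependence directly. If $C$ is a single loop then it is balanced only when its label is $1$, which by the conventions for a graph representation means the corresponding column of $A$ is zero, and we are done; so assume $C$ has a cyclic vertex--edge sequence $v_1, e_1, v_2, \ldots, e_k, v_{k+1} = v_1$ with $k \ge 2$ and $v_1, \ldots, v_k$ distinct. Each $e_i$ is then a non-loop arc, so $A[e_i]$ has exactly two nonzero entries, lying in rows $v_i$ and $v_{i+1}$. I look for $\lambda \in \bF^{E(C)}$ with $\sum_i \lambda_i A[e_i] = 0$; since these columns are supported only on rows $v_1, \ldots, v_k$, it suffices that for each $j$ the single scalar equation $\lambda_{j-1} A[e_{j-1}, v_j] + \lambda_j A[e_j, v_j] = 0$ coming from row $v_j$ holds (subscripts read cyclically). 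Setting $\lambda_1 = 1$ and propagating around the cycle via $\lambda_j = -A[e_{j-1}, v_j] A[e_j, v_j]^{-1} \lambda_{j-1}$ determines every $\lambda_j$ and leaves a single consistency condition upon returning to $v_1$.

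The one genuine computation — and the only place care is needed — is verifying that this consistency condition is precisely $\sign(C) = 1$. This comes from substituting the label formula $\Sigma(e) = -A[e,x] A[e,y]^{-1}$ for an arc from $x$ to $y$ into $\sign(C) = \prod_{e \in C^+} \Sigma(e) \prod_{e \in C^-} \Sigma(e)^{-1}$ and noting that, whichever way $e_i$ is oriented, it contributes the factor $-A[e_i, v_i] A[e_i, v_{i+1}]^{-1}$ to $\sign(C)$ (the case $e_i \in C^-$ using $(-x)^{-1} = -x^{-1}$); thus $\sign(C)$ equals, up to the overall sign $(-1)^k$ which cancels, the reciprocal of the product of ratios accumulated by the propagation. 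Hence $C$ balanced means the propagation closes consistently, giving a well-defined nonzero $\lambda$, so $E(C)$ is dependent in $M(A)$, as required.
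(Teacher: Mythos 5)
Your proof is correct, and it is exactly the argument the paper has in mind: the paper offers no proof beyond remarking that the statement is ``fairly straightforward to check by considering linear dependencies in the columns of $A[C]$,'' which is precisely what you carry out (the rank/handshake count for the non-cycle case, and the propagation of coefficients around a balanced cycle, whose consistency condition is indeed $\sign(C)=1$ after the $(-1)^k$ cancellation you note). No gaps.
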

   
  We say $(G, \Sigma')$ was obtained from $(G, \Sigma)$ by \emph{resigning} if, for some $\gamma \in \Gamma$ and for some partition $(U,W)$ of the vertices of $G$, we have
  \begin{align*}
    \Sigma'(e) = \begin{cases}
        \gamma \Sigma(e) & \text{ if } e \text{ runs from } U \text{ to } W;\\
        \gamma^{-1} \Sigma(e) & \text{ if } e \text{ runs from } W \text{ to } U;\\
        \Sigma(e) & \text{ otherwise}.
    \end{cases}
  \end{align*} 
  It is easily checked that $(G, \Sigma)$ and $(G, \Sigma')$ have the same collection of balanced cycles. In the representation this corresponds to scaling the rows indexed by $W$ by a factor $\gamma$.

   In what follows, we need to assume that the graph representation of a frame matroid is connected, which means that there is a path between every pair of vertices. It is easy to show that a vertically $3$-connected matroid, with no elements $e$ such that $r_M(e) = 0$, has the property that every pair of elements is in a circuit. A consequence of this is:

   \begin{lemma}\label{lem:graphconn}
     Let $(G,\Sigma)$ be a graph representation of an $\bF$-represented frame matroid $M$. If $M$ is vertically 3-connected and has no loops, then $G$ is connected.
   \end{lemma}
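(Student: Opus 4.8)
The plan is to argue by contradiction: I assume $G$ is disconnected and deduce that $M$ is not vertically $3$-connected. Fix a frame matrix $A$ generating $M$ for which $(G,\Sigma)$ is the graph representation, with row set $V = V(G)$ and column set $E = E(M)$. Since $M$ is loopless, $A$ has no zero column, and we may harmlessly assume $A$ has no zero row either — a zero row is just an isolated vertex of $G$, which can be deleted without changing $M$ — so $G$ has no isolated vertices. I will use the fact highlighted just before the lemma: a loopless vertically $3$-connected matroid has every pair of elements in a common circuit. This holds because such a matroid is connected in the usual (Tutte) sense — by the remark characterising vertical $2$-connectivity, a loopless matroid that splits as a direct sum of two nonempty matroids in fact splits as a direct sum of two \emph{positive-rank} matroids, hence is not vertically $2$-connected, hence not vertically $3$-connected — and connected matroids have this property by a standard result.

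The heart of the argument is the block structure of $A$ forced by a disconnection of $G$. Let $V_1$ be the vertex set of one component of $G$ and $V_2 = V \setminus V_1$; both are nonempty and no edge of $G$ joins them. Every column of $A$ is nonzero, and its support is the set of endpoints of a single edge of $G$, which lies in one component; so each column is supported entirely within $V_1$ or entirely within $V_2$. Let $E_1, E_2$ be the resulting partition of $E$; both are nonempty since $G$ has no isolated vertices. After permuting rows and columns, $A$ is block diagonal with a nonzero block $A_i$ on rows $V_i$ and columns $E_i$ for $i = 1,2$.

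The conclusion now follows quickly. For any $C \subseteq E$, the columns $A[C]$ are linearly dependent if and only if $A[C \cap E_1]$ or $A[C \cap E_2]$ are, by block diagonality; so if $C$ is a circuit of $M$, minimality forces $C \subseteq E_1$ or $C \subseteq E_2$. Choosing $e \in E_1$ and $f \in E_2$, the pair $\{e,f\}$ lies in a common circuit of $M$ by the fact above, yet no circuit of $M$ meets both $E_1$ and $E_2$ — a contradiction. Hence $G$ is connected. (Alternatively and more directly: the block decomposition already exhibits $M = M(A_1) \oplus M(A_2)$ as a direct sum of two positive-rank matroids, so $M$ is not vertically $2$-connected, let alone vertically $3$-connected.)

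I expect the difficulties here to be entirely matters of bookkeeping rather than substance. One must check that loop edges of $G$ — which are permitted and need not correspond to loops of $M$ — and parallel classes of $M$ respect the partition $(E_1, E_2)$; they do, since a loop at $v$ has support $\{v\}$ and parallel elements have proportional columns. One must also dispose of the trivial cases $|E(M)| \leq 1$ and of isolated vertices of $G$ arising from zero rows of $A$, both of which are immediate. The only genuinely external ingredient is the standard fact that every connected matroid has each pair of elements in a common circuit, which the remarks preceding the lemma invite us to take for granted.
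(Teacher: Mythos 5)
Your proof is correct and takes essentially the same route the paper intends: the paper presents the lemma as ``a consequence of'' the fact that a vertically $3$-connected matroid with no rank-$0$ elements has every pair of elements in a common circuit, and your block-diagonal argument (together with the even quicker alternative that the block structure violates vertical $2$-connectedness, exactly as the paper's remark on direct sums licenses) is precisely that derivation. The only caveat, your harmless normalisation that $A$ has no zero rows so that $G$ has no isolated vertices, is an imprecision already implicit in the paper's statement and not a genuine gap.
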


   \begin{corollary}\label{cor:graphconn}
     Let $M$ be an $\bF$-represented frame matroid. If $M$ is vertically 3-connected, then $M$ has a graph representation $(G,\Sigma)$ with $G$ connected.
   \end{corollary}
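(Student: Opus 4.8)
The plan is to reduce to Lemma~\ref{lem:graphconn} by deleting the loops of $M$, applying that lemma to the resulting loopless matroid, and then reattaching the loops as loop-edges at a single vertex. One degenerate case is disposed of immediately: if $r(M)=0$ then every element of $M$ is a loop, so $M$ is generated by a $1\times|M|$ zero matrix, whose graph representation consists of a single vertex carrying one label-$1$ loop for each element of $M$; this graph is connected. So assume $r(M)\geq 1$.

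Let $L$ be the set of loops of $M$, that is, the elements $e$ with $r_M(\{e\})=0$, and put $M'=M\del L$. I claim that $M'$ is a loopless, vertically $3$-connected frame matroid. Deletion does not change the rank of a surviving set, so $M'$ is loopless; and restricting a frame generator matrix of $M$ to the columns indexed by $E(M')$ gives a frame generator matrix of $M'$, so $M'$ is a frame matroid. For vertical $3$-connectivity the key point is that adjoining loops to a set alters neither its rank nor $r(M)$: given any partition $(X,Y)$ of $E(M')$ with $r_{M'}(X)+r_{M'}(Y)<r(M')+2$, the partition $(X\cup L,Y)$ of $E(M)$ satisfies $r_M(X\cup L)+r_M(Y)=r_{M'}(X)+r_{M'}(Y)<r(M)+2$, so by vertical $3$-connectivity of $M$ one of $X\cup L$, $Y$ is spanning in $M$; since $r_M(X\cup L)=r_{M'}(X)$ and $r(M)=r(M')$, it follows that one of $X$, $Y$ is spanning in $M'$.

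Now apply Proposition~\ref{frameexist} to obtain a generator matrix $A'$ of $M'$ with a graph representation $(G',\Sigma')$. Since $M'$ is loopless and vertically $3$-connected, Lemma~\ref{lem:graphconn} shows that $G'$ is connected. As $r(M')=r(M)\geq 1$, the matrix $A'$ has a nonzero entry, so $G'$ has at least one vertex; fix such a vertex $v$. Let $A$ be obtained from $A'$ by appending, for each element of $L$, a zero column. Every vector of $\rowspace(A')$ vanishes on the coordinates in $L$ (as each such coordinate indexes a loop of $M$), so $M(A)=M$; and extending $(G',\Sigma')$ by declaring each new column to be a loop at $v$ with label $1$ yields a graph representation $(G,\Sigma)$ of $A$, hence of $M$. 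Since $G'$ is connected and every new edge is incident with $v\in V(G')$, the graph $G$ is connected, which is what we wanted.

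The only step I expect to require genuine care is the verification that $M\del L$ is again vertically $3$-connected; the rest is bookkeeping once the rank-$0$ case is set aside. (One could instead argue directly, without passing to $M'$: take a graph representation of $M$ whose generator matrix has no zero row, and note that a disconnection of $G$ would exhibit $M$ as a direct sum of two matroids of positive rank, contradicting vertical $2$-connectivity; but this too needs the rank-$0$ case handled separately.)
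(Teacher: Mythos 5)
Your proof is correct and takes essentially the route the paper intends: the corollary is the direct consequence of Proposition~\ref{frameexist} and Lemma~\ref{lem:graphconn}, with the matroid loops (and the rank-$0$ degenerate case) handled by attaching them as label-$1$ graph loops at a single existing vertex, which is precisely the detail the paper leaves implicit. (One cosmetic slip: in verifying $M(A)=M$ you should speak of vectors of $\rowspace(A)$, or of the subspace defining $M$, vanishing on $L$, since $\rowspace(A')$ has no coordinates indexed by $L$.)
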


  \subsection*{Asymptotically good matroids} Finally, we redefine asymptotic goodness, this time for matroids. For $\alpha,\beta \in \bR$ we say a sequence $M_0,M_1 \dotsc,$ of matroids is \emph{$(\alpha,\beta)$-good} if $|M_i| \ge i$, $r(M_i) \ge \alpha |M_i|$ and $g(M_i^*) \ge \beta|M_i|$ for each $i \in \bZ_0^+$. A class $\cM$ of matroids is \emph{asymptotically good} if $\cM$ contains an $(\alpha,\beta)$-good sequence for some $\alpha,\beta \in \bR^+$. Note that $\alpha,\beta \le 1$ for any such $\alpha,\beta$. For any finite field $\bF$, the class of  all $\bF$-represented matroids is such a class.

  \section{Connectivity}
  Our goal in this section is to show that, to prove our main result, it suffices to focus on highly connected matroids.
    
\begin{lemma}\label{connreduction}
  If $t \in \bZ$ and $\cM$ is an asymptotically good minor-closed class of matroids, then the class of vertically $t$-connected matroids in $\cM$ is asymptotically good. 
\end{lemma}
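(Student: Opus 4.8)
The plan is to turn an $(\alpha,\beta)$-good sequence in $\cM$ into an $(\tfrac{\alpha}{2},\beta)$-good sequence of vertically $t$-connected matroids in $\cM$. We may assume $t\ge 2$, since for $t\le 1$ every matroid is vertically $t$-connected. Fix an $(\alpha,\beta)$-good sequence $M_0,M_1,\dots$ in $\cM$. Everything reduces to the following claim: \emph{there is an integer $n_0=n_0(t,\alpha,\beta)$ such that every matroid $M$ with $|M|\ge n_0$, $r(M)\ge\alpha|M|$ and $g(M^*)\ge\beta|M|$ has a minor $N$ which is vertically $t$-connected and satisfies $|N|\ge\beta|M|$, $r(N)\ge\tfrac{\alpha}{2}|N|$, and $g(N^*)\ge\beta|N|$.} Granting this, apply the claim to each $M_i$ with $i\ge n_0$; the resulting $N_i$ lies in $\cM$ since $\cM$ is minor-closed, and $|N_i|\ge\beta|M_i|\ge\beta i\to\infty$, so reindexing the $N_i$ gives an $(\tfrac{\alpha}{2},\beta)$-good sequence of vertically $t$-connected members of $\cM$.

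The claim will be proved by building $N$ from $M$ using \emph{contractions only}. The point is that contraction never decreases cogirth: if $N'=N\con X$ then $(N')^*=N^*\del X$, whose circuits are precisely those of $N^*$ avoiding $X$, so $g((N')^*)\ge g(N^*)$. Thus every matroid built along the way has cogirth at least $g(M^*)\ge\beta|M|$, and since its size is at most $|M|$, its cogirth is at least $\beta$ times its own size — so the requirement $g(N^*)\ge\beta|N|$ comes for free. I will also use repeatedly that if a matroid $K$ satisfies $g(K^*)\ge\delta$ and $Y\subseteq E(K)$ is non-spanning, then $E(K)\del Y$ contains a cocircuit of $K$, so $|E(K)\del Y|\ge\delta$. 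Consequently, in any partition $(A,B)$ of $E(K)$ witnessing that $K$ is not vertically $t$-connected — so both $A$ and $B$ are non-spanning — taking $\delta=\beta|M|$ gives $|A|\ge\beta|M|$ and $|B|\ge\beta|M|$, and in particular any such $K$ has $|K|\ge 2\beta|M|$.

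Here is the construction. Put $N:=M$ and iterate: if $N$ is vertically $t$-connected, stop; otherwise choose a partition $(A,B)$ of $E(N)$ with $r_N(A)+r_N(B)<r(N)+t-1$ and with neither part spanning. Write $\lambda:=r(N)/|N|$ for the current rank ratio (so $\lambda\ge\alpha$ at the start). By submodularity $r_N(A)+r_N(B)\ge r(N)=\lambda(|A|+|B|)$, so at least one of $r_N(A)\ge\lambda|A|$ and $r_N(B)\ge\lambda|B|$ holds; say $r_N(B)\ge\lambda|B|$, and replace $N$ by $N\con A$. Then $|N\con A|=|B|\ge\beta|M|$, cogirth is preserved, and since $r_N(A)<r(N)+t-1-r_N(B)$ we get $r(N\con A)=r(N)-r_N(A)>r_N(B)-(t-1)\ge\lambda|B|-(t-1)$, so the new matroid has rank ratio at least $\lambda-(t-1)/|B|\ge\lambda-(t-1)/(\beta|M|)$. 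Each iteration strictly shrinks $|N|$ — indeed it removes a part of size at least $\beta|M|$ — while the matroid stays at size at least $2\beta|M|$ as long as it is not yet vertically $t$-connected, so the loop runs fewer than $1/\beta$ times, and every matroid it produces has size at least $\beta|M|$. Hence the final, vertically $t$-connected matroid $N$ has $|N|\ge\beta|M|$ and rank ratio at least $\alpha-\tfrac{1}{\beta}\cdot\tfrac{t-1}{\beta|M|}=\alpha-\tfrac{t-1}{\beta^2|M|}$; choosing $n_0:=\lceil 2(t-1)/(\alpha\beta^2)\rceil$ makes this at least $\tfrac{\alpha}{2}$, proving the claim.

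The main obstacle, I expect, is conceptual rather than computational: one must resist using deletions (which can destroy the cogirth bound) and see that pure contractions suffice, and one must observe that the cogirth hypothesis forces each contraction step to discard a constant fraction $\beta|M|$ of the ground set — which is exactly what bounds the number of steps, and hence the total additive loss in the rank ratio, by a constant depending only on $t,\alpha,\beta$. The remaining care is to read the submodular inequality in the right direction, so that we contract the side of the separation along which the rank ratio drops only by an additive $O(1/|M|)$ rather than by a multiplicative constant.
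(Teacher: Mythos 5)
Your proof is correct, but it takes a genuinely different route from the paper. You fix an arbitrary $(\alpha,\beta)$-good sequence and, from each term, \emph{construct} a vertically $t$-connected minor by repeatedly contracting one side of a vertical separation, keeping the side whose rank is at least its proportional share (the averaging/submodularity step); the cogirth bound is preserved because cocircuits of a contraction are cocircuits of the original, and the same cogirth bound forces both sides of any separation to have size at least $\beta|M|$, which caps the number of iterations at about $1/\beta$ and hence the total additive loss in the rank ratio at $(t-1)/(\beta^2|M|)$, yielding an $(\tfrac{\alpha}{2},\beta)$-good sequence of $t$-connected minors. The paper instead runs an extremal argument: it chooses $\beta$ close to $\beta_{\max}$ and $\alpha$ close to $\alpha_{\max}(\beta)$ and shows that, for such near-extremal parameters, all but finitely many terms of the good sequence are \emph{themselves} vertically $t$-connected, since a low-order vertical separation would produce (by contracting one side, using the same two facts you use about cocircuits and separations) a minor whose parameters beat the extremal ones, a contradiction. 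Your argument is more constructive and self-contained, works for any good sequence, and gives explicit parameter loss ($\alpha\mapsto\alpha/2$, $\beta$ preserved); the paper's buys the slightly stronger conclusion that a suitably chosen good sequence eventually consists of $t$-connected matroids with no need to pass to minors, at the cost of the sup/extremality bookkeeping. Only cosmetic caveats: you should note (as you implicitly do via $\beta\le 1$) that the no-iteration case still satisfies $|N|\ge\beta|M|$, and the reindexing at the end should be phrased as choosing a subsequence so that the $i$th term has at least $i$ elements; both are immediate.
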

\begin{proof}
  For each $\beta \in \bR^+$ let $A_{\beta}$ denote the set of all $\alpha \in \bR^+$ such that $\cM$ contains an $(\alpha,\beta)$-good sequence.
  Let $B = \{\beta \in \bR^+: A_\beta \ne \varnothing\}$. By assumption, $B$ is a nonempty interval with $\inf(B) = 0$ and $\sup(B) \le 1$; let $\beta_{\max} = \sup(B)$. Each nonempty $A_x$ is also such an interval; for each $x \in B$, let $\alpha_{\max}(x) = \sup(A_x)$, noting that $\alpha_{\max}(x) \le 1$. 
  
  Let $\beta \in (\tfrac{2}{3}\beta_{\max},\beta_{\max})$ and let $\delta = \tfrac{\beta}{4(1-\beta)}$. Set $\alpha_{\max} = \alpha_{\max}(\beta)$ and let $\alpha \in ((1+\tfrac{1}{2}\delta)^{-1}\alpha_{\max},\alpha_{\max})$. We have $\beta \in B$ and $\alpha \in A_{\beta}$; let $M_0, M_1, \dotsc$ be an $(\alpha,\beta)$-good sequence of matroids in $\cM$. Since $2\beta > \beta_{\max}$ we have $2\beta \notin B$ so $A_{2\beta} = \varnothing$ and in particular $\tfrac{1}{2}\alpha \notin A_{2\beta}$. Moreover, we have $(1+\delta)\alpha > \alpha_{\max}$ so $(1+\delta)\alpha \notin A_\beta$. There are therefore integers $m_1$ and $m_2$ such that and every matroid $M \in \cM$ with $|M| \ge m_1$ satisfies $g(M^*) < 2\beta |M|$ or $r(M) < \tfrac{1}{2}\alpha|M|$, and every matroid $M \in \cM$ with $|M| \ge m_2$ satisfies $g(M^*) < \beta |M|$ or $r(M) < \alpha(1+\delta)|M|$. Let $m = \max(m_1,m_2,8t\alpha^{-1})$. 
  
  \begin{claim}
    For each $n \in \bZ$ with $n \ge 2\beta^{-1}m$, the matroid $M_n$ is vertically $t$-connected. 
  \end{claim}
  \begin{proof}
    Suppose for a contradiction that $n \ge 2\beta^{-1}m$ and $M = M_n$ is not vertically $t$-connected. Let $(X_1,X_2)$ be a partition of $E(M)$ with $r_M(X_1) + r_M(X_2) = r(M) + t' - 1$ with $t' < t$ and $r_M(X_1),r_M(X_2) \ge t'$. Let $N_1 = M\con X_2$ and $N_2 = M \con X_1$. Note that $N_1,N_2 \in \cM$ and that $r(N_i) = r(M)-r_M(X_{3-i}) > r(M)-t+1$, so $r(N_1) + r(N_2) > r(M)-2t$. Since every cocircuit of $N_1$ or $N_2$ is a cocircuit of $M$, we have $g(M^*) \le \min(g(N_1^*),g(N_2^*))$.We have
        \[ \beta|M| \le g(M^*) \le \min(g(N_1^*),g(N_2^*)) \le \tfrac{1}{2}(g(N_1^*) + g(N_2^*))\]
    and, since $|M| = |N_1| + |N_2|$, we have either $g(N_1^*) \ge 2\beta|N_1|$ or $g(N_2^*) \ge 2\beta|N_2|$. We may assume that the first case holds. Since each of $X_1$ and $X_2$ contains a cocircuit of $M$, we have $|N_i| = |X_i| \ge g(M^*) \ge \beta|M| \ge m$ for each $i \in \{1,2\}$. Since $|N_1| + |N_2| = |M|$, this implies that $|N_1| \ge \tfrac{\beta}{1-\beta}|N_2|$. Moreover, $m \ge m_1$ gives $r(N_1) < \tfrac{1}{2}\alpha|N_1|$. We have 
    \begin{align*}
      \alpha|M| &\le r(M) \\
            &< r(N_1) + r(N_2) + 2t\\
            &< \tfrac{1}{2}\alpha|N_1| + r(N_2) + 2t\\
            &\le \tfrac{3}{4}\alpha|N_1| + r(N_2),
    \end{align*}
    Where the last line uses $\tfrac{1}{4}\alpha|N_1| \ge \tfrac{1}{4}\alpha m \ge 2t$. From this and $|M| = |N_1|+|N_2|$ we get 
    \begin{align*}
      r(N_2) &\ge \tfrac{1}{4}\alpha|N_1| + \alpha|N_2|\\
           &= \alpha\left(1 + \delta \right)|N_2|
    \end{align*} 
    Now $|N_2| \ge m \ge m_2$ so $g(N_2^*) < \beta |N_2|$ by choice of $m_2$. However $g(M^*) \le g(N_2^*)$ and $\beta|N_2| < \beta|M|$, so $g(M^*) < \beta|M|$, contradicting the definition of $M$. 
  \end{proof}
  By the claim, all but finitely many terms of the sequence $M_0,M_1, \dotsc$ are vertically $t$-connected, so the class of vertically $t$-connected matroids in $\cM$ is asymptotically good, as required. 
\end{proof}

\section{The Structure Theorem}\label{structuresection}

  For each field $\bF$ of prime characteristic $p$, we write $\Fprime$ for the unique subfield of $\bF$ with $p$ elements. In this section we state the deep structural result on which our proof is based. Essentially the theorem states that for any minor-closed class $\cM$ of $\bF$-represented matroids not containing all $\Fprime$-represented matroids, the highly connected members of $\cM$ are `close' to being an $\bF$-represented frame matroid or its dual. We need to define our notion of distance. 
  
  Our distance metric is based on `lifts' and `projections'. If $M_1 = (E,U_1)$ and $M_2 = (E,U_2)$ are $\bF$-represented matroids and there is an $\bF$-represented matroid $M$ with ground set $E \cup \{e\}$ satisfying $M \del e = M_1$ and $M \con e = M_2$, then we say that $M_2$ is an \emph{elementary projection} of $M_1$ and $M_1$ is an \emph{elementary lift} of $M_2$. For arbitrary $\bF$-represented matroids $M_1$ and $M_2$ on a common ground set $E$, we write $\dist(M_1,M_2)$ for the minimum number of elementary lifts/projections required to transform $M_1$ into $M_2$. (It is clear that any matroid on $E$ can be transformed into the rank-$0$ matroid on $E$ by a finite sequence of projections, so this distance is always finite.) It is easy to see that, if $\dist(M_1,M_2) \le k$, then there is a matroid $M$ with ground set $E \cup C \cup D$ satisfying $M \del D \con C = M_1$ and $M \con D \del C = M_2$, where $|C|+|D| \le k$. Each elementary lift and projection can change the rank of a subset by at most one. From this we can deduce the following lemma.

  \begin{lemma}\label{lem:connectedperturb}
  	Let $M, N$ be $\bF$-represented matroids with $\dist(M,N) \leq k$. If $M$ is vertically $t$-connected, then $N$ is vertically $(t - 2k)$-connected.
  \end{lemma}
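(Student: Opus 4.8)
The plan is to induct on $k$, the case $k=0$ (where $M=N$) being vacuous. Since $\dist(M,N)\le k$, there is a sequence $M=L_0,L_1,\dotsc,L_k=N$ in which each $L_{i+1}$ is an elementary lift or an elementary projection of $L_i$. As $\dist(M,L_{k-1})\le k-1$, the inductive hypothesis gives that $L_{k-1}$ is vertically $(t-2(k-1))$-connected, so it suffices to prove the one-step statement: \emph{if $N$ is an elementary lift or projection of a vertically $s$-connected matroid $L$, then $N$ is vertically $(s-2)$-connected.}

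For this, fix a matroid $Q$ on $E\cup\{e\}$ with $\{L,N\}=\{Q\del e,\ Q\con e\}$. For each $X\subseteq E$ we have $r_{Q\del e}(X)=r_Q(X)$ and $r_{Q\con e}(X)=r_Q(X\cup e)-r_Q(e)$, and since $0\le r_Q(X\cup e)-r_Q(X)\le r_Q(e)\le 1$ these values differ by $0$ or $1$, with $Q\con e$ always giving the smaller one. Hence one of $L,N$ has rank function pointwise at least that of the other, and the two rank functions agree to within $1$ on every subset of $E$, in particular on $E$ itself. Writing $\lambda_P(X):=r_P(X)+r_P(E\setminus X)-r(P)$ for a matroid $P$ on $E$, and using that passing between $L$ and $N$ shifts each of $r(X)$, $r(E\setminus X)$, $r(E)$ by at most one and all in the same direction, we obtain $|\lambda_L(X)-\lambda_N(X)|\le 2$ for every $X\subseteq E$.

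Now let $(A,B)$ be a partition of $E$ with $r_N(A)+r_N(B)<r(N)+(s-2)-1$, that is, $\lambda_N(A)\le s-4$; we must show $A$ or $B$ is spanning in $N$. By the previous paragraph $\lambda_L(A)\le s-2<s-1$, so vertical $s$-connectivity of $L$ yields that one of $A,B$ --- say $A$ --- is spanning in $L$. It remains to transfer this to $N$. If $L=Q\del e$ this is straightforward: $r_Q(A)=r_Q(E)$ forces $\cl_Q(A)=\cl_Q(E)$, hence $\cl_Q(A\cup e)=\cl_Q(E\cup e)$, and a short rank count gives $r_N(A)=r(N)$. If $L=Q\con e$ (so $N=Q\del e$) we know only that $r_Q(A\cup e)=r_Q(E\cup e)$, and we must descend to $r_Q(A)=r_Q(E)$; this requires a separate argument, splitting on whether $e\in\cl_Q(A)$ and, in the awkward case, first replacing $A$ by $A\cup\{b\}$ for an element $b\in B$ with $b\notin\cl_Q(A)$ before re-running the separation argument with the slack afforded by the bound on $\lambda_N(A)$.

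The bookkeeping in the first two paragraphs is routine, and the loss of $2$ per elementary step is precisely the amount by which $\lambda$ can move in a single step. The step I expect to be the genuine obstacle is the transfer in the third paragraph, and especially its case $L=Q\con e$, $N=Q\del e$: whereas the rank function is controlled uniformly by the passage between $Q\del e$ and $Q\con e$, the property of being spanning is not, so recovering it needs the structure of the separation $(A,B)$ rather than rank inequalities alone.
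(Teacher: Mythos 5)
Your reduction to a single elementary step and the estimate $|\lambda_L(X)-\lambda_N(X)|\le 2$ are fine (in fact one lift or projection changes $\lambda$ by at most $1$), and your transfer of spanning in the case $L=Q\del e$, $N=Q\con e$ is correct. But the case you defer in your third paragraph --- $L=Q\con e$, $N=Q\del e$ --- is not a routine verification to be filled in later: the one-step claim is false there, so no patch of the kind you sketch can close the gap. Concretely, represent $L=F_7$ by the $3\times 7$ matrix over $\GF(2)$ whose columns are the nonzero vectors of $\GF(2)^3$. Any non-spanning set of $F_7$ lies in a $3$-point line and two lines cannot cover the seven points, so every partition of the ground set has a spanning side; hence $L$ is vertically $s$-connected for every $s$ under the paper's definition. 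Now form $Q$ by appending a fourth row whose only nonzero entry sits in the column of one point $f$, together with a new column $e$ equal to the fourth unit vector. Then $Q\con e=L$, while $N:=Q\del e$ is an elementary lift of $L$ in which $f$ has become a coloop; thus $N$ is a direct sum of two positive-rank matroids and is not even vertically $2$-connected, although $\dist(L,N)=1$. With $s=4$ your one-step claim (and the lemma with $t=4$, $k=1$) would force $N$ to be vertically $2$-connected, so both fail. Moreover the partition $(A,B)=(E-\{f\},\{f\})$ is exactly your ``awkward case'': $A$ spans $L$, $e\notin\cl_Q(A)$, and your proposed repair of absorbing an element $b\in B\setminus\cl_Q(A)$ into $A$ merely empties $B$ and yields nothing.

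What your first two paragraphs genuinely give is weaker: if $\lambda_N(A)<t-2k-1$ then some side is \emph{nearly} spanning, namely $r_N(A)\ge r_M(A)-k=r(M)-k\ge r(N)-2k$. The example shows that, under the paper's definition of vertical connectivity, near-spanning cannot in general be upgraded to spanning across a lift; be aware that the paper itself gives only a one-sentence justification of this lemma and glosses over precisely the point you flagged, and the same example is problematic for the statement as printed. So your proposal, as written, does not prove the lemma, and completing it along these lines would require either weakening the conclusion to the near-spanning form (which is what the $\lambda$-bookkeeping actually supports) or adding hypotheses that exclude lifts creating separations of the above kind.
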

  
  The structure theorem, a weakened combination of Lemma 2.1 and Theorems~3.2 and 3.3 in [\ref{ggw}], can now be stated.  
  
  \begin{theorem}\label{big}
    Let $\bF$ be a finite field and let $\cM$ be a minor-closed class of $\bF$-represented matroids not containing all projective geometries over $\Fprime$. There exists $k \in \bZ^+$ such that every vertically $k$-connected matroid $M$ in $\cM$ satisfies $\dist(M,N) \le k$ or $\dist(M^*,N) \le k$ for some $\bF$-represented frame matroid $N$. 
  \end{theorem}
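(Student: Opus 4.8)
The plan is to obtain Theorem~\ref{big} as a consequence of the Geelen--Gerards--Whittle structure theorem together with its companion results (Theorems~3.2 and 3.3 and Lemma~2.1 of [\ref{ggw}]). Invoking those results is immediate; the two pieces of real work are to translate the proximity notion in which [\ref{ggw}] phrases its conclusion into the metric $\dist$ used here, and to remove the ``representable over a proper subfield'' outcome of the structure theorem, which I would do by induction on $|\bF|$.

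First I would invoke the structure theorem. Since $\cM$ is minor-closed and omits some projective geometry over $\Fprime$, it in fact omits $\PG(s-1,\Fprime)$ for all sufficiently large $s$; so [\ref{ggw}] supplies a constant $k_0$ such that every vertically $k_0$-connected $M \in \cM$ is, within the bounded perturbation distance used there, one of: (i) a matroid of rank at most $k_0$; (ii) an $\bF$-represented frame matroid; (iii) the dual of an $\bF$-represented frame matroid; or (iv) an $\bF'$-represented matroid $M'$ for some proper subfield $\bF' \subsetneq \bF$. Each elementary perturbation move used in [\ref{ggw}]---deleting or adding a bounded number of elements, and bounded-rank alterations of a representing matrix---is realised by a bounded number of elementary lifts and projections, so, appealing to Lemma~2.1 of [\ref{ggw}], proximity in their sense with parameter $k_0$ yields $\dist$-proximity with some parameter $c = c(k_0)$. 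I would also record that $\dist$ is symmetric and satisfies the triangle inequality, and that it commutes with duality: since $(M\del e)^* = M^*\con e$, an elementary lift over $M$ is an elementary projection over $M^*$, whence $\dist(M_1,M_2) = \dist(M_1^*,M_2^*)$. In case (i) the rank-$0$ matroid $M_0$ on $E(M)$ is a frame matroid with $\dist(M,M_0) \le r(M) \le k_0$, so (i) is subsumed in (ii), and in cases (i)--(iii) we are done.

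The remaining work is to eliminate case (iv), which I would do by induction on $|\bF|$; the base case $\bF = \Fprime$ has no proper subfield and so avoids case (iv) entirely. Suppose then that $\dist(M,M') \le c$ with $M'$ represented over $\bF'$, $\Fprime \subseteq \bF' \subsetneq \bF$; by Lemma~\ref{lem:connectedperturb}, $M'$ inherits high vertical connectivity from $M$. Using the robustness of projective geometries under bounded perturbation---a matroid within $\dist$ at most $c$ of $\PG(t-1,\Fprime)$ has a $\PG(s-1,\Fprime)$-minor once $t$ is large relative to $s$ and $c$, a fact I would expect to be contained in Theorem~3.3 or Lemma~2.1 of [\ref{ggw}]---together with the routine observation that any minor of $M'$ lifts to a minor of $M$ at $\dist$ at most $c$ (realised inside a common extension witnessing $\dist(M,M')\le c$), one sees that $M'$ has no $\PG(t-1,\Fprime)$-minor for a suitable $t = t(s,c)$: otherwise $M$ would itself have a $\PG(s-1,\Fprime)$-minor, contradicting $M \in \cM$. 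Hence $M'$ lies in the minor-closed class $\cM'$ of all $\bF'$-represented matroids with no $\PG(t-1,\Fprime)$-minor, which omits the $\Fprime$-projective geometry $\PG(t-1,\Fprime)$ and whose prime subfield is again $\Fprime$. Since $|\bF'|<|\bF|$, the inductive hypothesis applies to $\cM'$ over $\bF'$, so (provided the connectivity threshold in the statement is chosen large enough to overcome the drop in Lemma~\ref{lem:connectedperturb}) either $M'$ or $(M')^*$ is within $\dist$ a bounded amount of an $\bF'$-represented frame matroid $N$. But an $\bF'$-frame matrix is also an $\bF$-frame matrix and lifts and projections over $\bF'$ are lifts and projections over $\bF$, so $N$ is an $\bF$-represented frame matroid and the bound persists over $\bF$; combining with $\dist(M,M')\le c$, the triangle inequality, and duality symmetry, $M$ or $M^*$ is within $\dist$ a bounded amount of an $\bF$-represented frame matroid. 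The recursion has depth at most $[\bF:\Fprime]$, so all the accumulated constants can be absorbed into a single $k$ that serves simultaneously as the connectivity threshold and the distance bound.

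The hard part will be the distance translation and the subfield elimination: identifying precisely the proximity notion used in [\ref{ggw}] and establishing the comparison with $\dist$ (including its behaviour under duality), and carrying an excluded $\Fprime$-projective-geometry minor through the subfield recursion---the latter being what forces $k$ to be defined by descending recursion over the lattice of subfields of $\bF$. Everything else---the elementary metric properties of $\dist$, the duality bookkeeping, and the connectivity drop of Lemma~\ref{lem:connectedperturb}---is routine.
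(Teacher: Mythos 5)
Your proposal is essentially the paper's own approach: the paper gives no independent proof of Theorem~\ref{big} at all, but states it as a weakened combination of Lemma~2.1 and Theorems~3.2 and~3.3 of [\ref{ggw}], whose (unpublished) proofs carry out exactly the work you sketch --- translating bounded perturbations into the lift/projection metric $\dist$ and removing the proper-subfield outcome. Your induction on $|\bF|$ reconstructs what the prime-field version in [\ref{ggw}] (which assumes only that the class omits some projective geometry over $\Fprime$) already supplies, so for this paper that step, along with the facts you only ``expect'' to find in [\ref{ggw}], is simply absorbed into the citation rather than proved.
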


\section{Small Circuits}

  In this section we show that if $M$ is a rank-$r$ matroid with has significantly more than $r$ elements and $M$ or $M^*$ is close to a frame matroid or its dual, then the girth of $M$ is at most logarithmic in $r$. The primal case is slightly more difficult and will result from the following corollary of a result of Alon, Hoory and Linial [\ref{alonetal}] observed by Kashyap [\ref{kashyap}]:
    
  \begin{lemma}\label{moore}
    If $G$ is a graph with girth $g$ and average degree $\delta > 2$, then $g \le 4 + \tfrac{\log(|V(G)|)}{\log(\delta - 1)}$.
  \end{lemma}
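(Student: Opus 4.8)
The plan is to derive Lemma~\ref{moore} from the Moore bound for graphs of prescribed average degree. First I would dispose of the degenerate cases: since $G$ has average degree $\delta > 2$ it has strictly more edges than vertices, so it contains a cycle and $g$ is finite; and if $g \le 4$ there is nothing to prove, because the right-hand side of the asserted inequality is at least $4$. So I would assume $g \ge 5$ and write $g = 2r$ or $g = 2r+1$ according to parity, with $r \ge 2$.

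The key step is to invoke the theorem of Alon, Hoory and Linial~[\ref{alonetal}], in the form recorded by Kashyap~[\ref{kashyap}]: a graph on $n := |V(G)|$ vertices with average degree $\delta$ and girth $g$ satisfies $n \ge 1 + \delta\sum_{i=0}^{r-1}(\delta-1)^i$ when $g = 2r+1$, and $n \ge 2\sum_{i=0}^{r-1}(\delta-1)^i$ when $g = 2r$. (The classical Moore bound already gives these inequalities for graphs of minimum degree $\delta$; the content of~[\ref{alonetal}] is that the \emph{average} degree suffices, which is what one needs here since the graphs that arise later in the section are far from regular.) In either parity the right-hand side is a geometric sum with common ratio $\delta - 1 > 1$, hence at least its leading term; carrying the constants $1$ and $2$ through, one gets a bound $n > (\delta - 1)^{c}$ where $c$ is an explicit affine function of $g$ with positive leading coefficient. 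Since $\delta > 2$ forces $\log(\delta - 1) > 0$, I would then take logarithms and solve for $g$ to obtain the bound stated in Lemma~\ref{moore}.

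The one place any care is needed is this last elementary estimate: the odd and even cases must be handled separately, the even case is the binding one, and one must be slightly careful about how the additive constants in front of the two Moore-bound sums are absorbed into the final constant. I do not expect a genuine obstacle here — all of the mathematical weight has been imported from~[\ref{alonetal}] — so what remains is a one-paragraph computation.
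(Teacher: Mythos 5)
The paper gives no proof of Lemma~\ref{moore} at all: it is quoted as a corollary of the Alon--Hoory--Linial bound [\ref{alonetal}] as observed by Kashyap [\ref{kashyap}], so your route through the average-degree Moore bound is exactly the intended provenance, and your handling of the degenerate cases is fine. The genuine gap is the final step, which you dismiss as ``take logarithms and solve for $g$''. The Moore-type bound you quote has $r\approx g/2$ levels, so the exponent $c$ in $n>(\delta-1)^c$ is affine in $g$ with leading coefficient $\tfrac12$, not $1$: for $g=2r+1$ one gets $n>(\delta-1)^r$, hence $g\le 1+2\log n/\log(\delta-1)$, and for $g=2r$ one gets $n\ge 2(\delta-1)^{r-1}$, hence $g\le 2+2\log n/\log(\delta-1)$. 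Solving for $g$ therefore produces the bound with a factor $2$ in front of the logarithmic term; since $\log n\to\infty$, that factor cannot be absorbed into the additive constant $4$, so your argument does not deliver the inequality as stated in Lemma~\ref{moore}.

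Nor can this be repaired by a sharper estimate: the coefficient-$1$ statement is false. The bipartite Ramanujan graphs of Lubotzky--Phillips--Sarnak (and Margulis) are $(p+1)$-regular graphs on $n$ vertices with girth at least $\tfrac43\log_p n - O(1)$, and with $\delta-1=p$ this exceeds $4+\log_p n$ once $n$ is large. What your argument actually proves, and what should be stated, is $g\le 4+2\log(|V(G)|)/\log(\delta-1)$; with that form the constant $\alpha=2(\log(1+\beta))^{-1}$ in the proof of Lemma~\ref{disjointcircuits} (and the resulting cycle-length bound there and in Corollary~\ref{framematroidrankdeficiency}) must be enlarged accordingly, e.g.\ to $4(\log(1+\beta))^{-1}$, but the main theorem is unaffected since only the $O(\log n)$ order of the girth bound is used.
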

  
  The next three results extend the above lemma to matroids that are close to frame matroids. As before, a \emph{cycle} of a graph refers strictly to a set of its edges. 
  
  \begin{lemma}\label{disjointcircuits}
    Let $t \in \bZ^+$ and $\beta \in \bR^+$. If $G$ is a graph with $|V(G)| =  n\ge \max\left(\left(\frac{4t}{\beta\log(1+\beta)}\right)^2,(1+\beta)e^4\right)$ and $|E(G)| \ge (1+\beta)n$, then $G$ has a collection of $t$ pairwise edge-disjoint cycles, each of size at most $\tfrac{2\log n}{\log(1+\beta)}$. 
  \end{lemma}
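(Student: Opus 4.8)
The plan is to extract the $t$ cycles greedily, one at a time, deleting the edges of each cycle as it is found and re-applying the Moore-type bound of Lemma~\ref{moore} to what remains; the two lower bounds on $n$ in the hypothesis are exactly what make this go through. Abbreviate $L = \tfrac{2\log n}{\log(1+\beta)}$, the target length. I would argue by induction: suppose pairwise edge-disjoint cycles $C_1,\dots,C_i$ of $G$, each of size at most $L$, have been found for some $i$ with $0\le i<t$, and let $G_i$ be the spanning subgraph of $G$ with edge set $E(G)\setminus\bigl(E(C_1)\cup\dots\cup E(C_i)\bigr)$, from which we discard isolated vertices. Any cycle of $G_i$ of size at most $L$ is automatically edge-disjoint from $C_1,\dots,C_i$, so it suffices to produce one; running this for $t$ steps yields the desired collection.

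The role of the first hypothesis, $n\ge\bigl(\tfrac{4t}{\beta\log(1+\beta)}\bigr)^2$, is to ensure that the deletions performed so far have cost only a negligible fraction of the edges. Rearranging that hypothesis to $\tfrac{2t}{\log(1+\beta)}\le\tfrac{\beta}{4}\sqrt n$ and using the elementary inequality $\log n\le\sqrt n$ (valid for all $n\ge 1$), one gets
\[ tL \;=\; \frac{2t}{\log(1+\beta)}\cdot\log n \;\le\; \frac{\beta}{4}\sqrt n\cdot\sqrt n \;=\; \frac{\beta}{4}\,n. \]
Since the edges deleted so far number at most $iL < tL \le \tfrac{\beta}{4}n$, we have $|E(G_i)|\ge(1+\beta)n-\tfrac{\beta}{4}n>(1+\tfrac{\beta}{2})n$, while $|V(G_i)|\le n$. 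Hence the average degree of $G_i$ is $\delta_i \ge 2(1+\tfrac{\beta}{2}) = 2+\beta > 2$, so in particular $G_i$ is not a forest, and Lemma~\ref{moore} produces a cycle $C_{i+1}$ of $G_i$ with
\[ |E(C_{i+1})|\;\le\;4+\frac{\log|V(G_i)|}{\log(\delta_i-1)}\;\le\;4+\frac{\log n}{\log(1+\beta)}, \]
using $|V(G_i)|\le n$ and $\delta_i-1\ge 1+\beta$.

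The second hypothesis, $n\ge(1+\beta)e^4$, is then used to bound the right-hand side above by $\tfrac{2\log n}{\log(1+\beta)}=L$, so that $C_{i+1}$ has the required size and the induction closes. The argument is genuinely short; the step I expect to need the most care is the middle one, namely verifying that after up to $t-1$ rounds of edge deletions $G_i$ still has average degree above $2$ so that Lemma~\ref{moore} can be applied again. This is precisely the content of the estimate $tL\le\tfrac{\beta}{4}n$, and it is the reason the hypothesis must bound $n$ from below in terms of $t$ rather than merely by a constant; once that estimate is in hand, the remaining appeals to Lemma~\ref{moore} and to $\log n\le\sqrt n$ are routine.
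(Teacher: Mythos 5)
Your proof is correct and follows essentially the same route as the paper: the paper takes a maximal collection of edge-disjoint cycles of length at most $\tfrac{2\log n}{\log(1+\beta)}$ and derives a contradiction from Lemma~\ref{moore} applied to the graph with those cycles deleted, which is just your greedy induction phrased contrapositively, with the first lower bound on $n$ (via $\log n\le\sqrt n$) ensuring the deleted edges cost less than $\tfrac{\beta}{2}n$ and the second closing the Moore bound. One small arithmetic slip: the hypothesis only gives $\tfrac{2t}{\log(1+\beta)}\le\tfrac{\beta}{2}\sqrt n$ rather than $\tfrac{\beta}{4}\sqrt n$, but the resulting estimate $tL\le\tfrac{\beta}{2}n$ still yields $|E(G_i)|>(1+\tfrac{\beta}{2})n$ and average degree exceeding $2+\beta$, so nothing in your argument breaks.
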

  \begin{proof}
    Let $m = |E(G)|$ and $\alpha = 2(\log(1+\beta))^{-1}$. By choice of $n$ we have $\frac{t \alpha \log n}{n} < \tfrac{t \alpha}{\sqrt{n}} < \tfrac{1}{2}\beta$.
     Let $\cC$ be a maximal collection of pairwise disjoint cycles of $G$ such that $|E(C)| \le \alpha \log n$ for each $C \in \cC$. Assume for a contradiction that $|\cC| < t$; let $G' = G \del \cup \cC$. We have $\tfrac{|E(G')|}{n} \ge \tfrac{m - t\alpha \log(n)}{n} \ge 1+\beta - \tfrac{t \alpha \log(n)}{n} > 1 + \tfrac{1}{2}\beta$, so the average degree $\delta'$ of $G'$ is at least $2 + \beta$. By maximality of $\cC$, the graph $G'$ has girth $g' > \alpha \log (n)$, so Lemma~\ref{moore} gives \begin{align*}
      \alpha \log(n)  &< 4 + \frac{\log(n)}{\log(\delta'-1)}  \le 4 + \frac{\log(n)}{\log(1+\beta).}
\end{align*}
  Rearranging gives $\log(n) < 4\log(1+\beta)$, contradicting $n \ge (1+\beta)e^4$.
  \end{proof}
  
  \begin{corollary}\label{framematroidrankdeficiency}
    Let $\bF$ be a finite field of order $q$, let $\beta \in \bR^+$ and $t \in \bZ^+$. Let $M$ be an $\bF$-represented frame matroid with graph representation $(G,\Sigma)$, such that $G$ is connected. If $M$ satisfies \[r(M) \ge \max\left(|\bF|,(2q-3)\beta^{-1}+1-q,\tfrac{1}{q-1}\left(\tfrac{4t}{\beta\log(1+\beta)}\right)^2,\tfrac{1+\beta}{q-1}e^4\right)\] and $|M| \ge (1+q\beta)r(M)$, then there is a set $X \subseteq E(M)$ such that $r_M(X) \le |X|-t$ and $|X| \le \tfrac{4 t\log r(M)}{\log(1+\beta)}$.
  \end{corollary}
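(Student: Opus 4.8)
\emph{Proof proposal.} The plan is to transfer the statement to the graph $G$ and apply Lemma~\ref{disjointcircuits}. Write $n=|V(G)|$, so that $|E(G)|=|E(M)|=|M|$. First I would pin down $n$ in terms of $r(M)$: a non-loop edge of $G$ has a column with exactly two nonzero entries, so a spanning tree of the connected graph $G$ indexes an independent set of $M$ of size $n-1$, giving $r(M)\ge n-1$; and since $(G,\Sigma)$ arises from a frame matrix with $n$ rows, $r(M)\le n$. Hence $n\in\{r(M),r(M)+1\}$, and in particular $n\le r(M)+1\le r(M)^2$ (as $r(M)\ge|\bF|\ge 2$), so $\log n\le 2\log r(M)$. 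A short computation from $|M|\ge(1+q\beta)r(M)$, $n\le r(M)+1$ and the assumed lower bounds on $r(M)$ then shows $|E(G)|\ge(1+\beta)n$ and $n\ge\max\bigl((\tfrac{4t}{\beta\log(1+\beta)})^2,(1+\beta)e^4\bigr)$, so Lemma~\ref{disjointcircuits} applies to $G$.

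Applying it yields pairwise edge-disjoint cycles $C_1,\dots,C_t$ of $G$, each of size at most $\tfrac{2\log n}{\log(1+\beta)}\le\tfrac{4\log r(M)}{\log(1+\beta)}$; put $X=\bigcup_iE(C_i)$, so that $|X|=\sum_i|E(C_i)|\le\tfrac{2t\log n}{\log(1+\beta)}\le\tfrac{4t\log r(M)}{\log(1+\beta)}$, which is the required size bound. It remains to show $r_M(X)\le|X|-t$, i.e.\ that $X$ has nullity at least $t$ in $M$.

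The clean case is when $(G,\Sigma)$ is balanced --- equivalently $n=r(M)+1$, since a connected unbalanced biased graph has matroid rank equal to its number of vertices (take a spanning tree together with an unbalanced fundamental cycle). Then every cycle of $G$ is balanced, so by Proposition~\ref{bicycledependent} each $C_i$ contains a circuit of $M$; as the $C_i$ are edge-disjoint, these circuits have pairwise disjoint supports, so the corresponding linear dependencies among the columns of a generator matrix are linearly independent, and $r_M(X)\le|X|-t$. This already handles every binary frame matroid, which is graphic and hence always falls in this case.

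The main obstacle is the unbalanced case, where a single unbalanced $C_i$ is \emph{independent} in $M$ and contributes nothing to the nullity --- $t$ vertex-disjoint unbalanced short cycles would give $r_M(X)=|X|$. Here I would exploit the slack gained from $n=r(M)$ (so $\log n=\log r(M)$, i.e.\ a full factor $2$ in hand): extract instead $2t$ --- or indeed as many as desired --- pairwise edge-disjoint short cycles while keeping $|X|\le\tfrac{4t\log r(M)}{\log(1+\beta)}$, and replace ``circuit'' by ``short dependent subgraph'': two edge-disjoint short cycles meeting in a vertex form a connected subgraph of minimum degree $2$ that is not a cycle, hence a dependent set by Proposition~\ref{bicycledependent}, each such subgraph costing one unit of nullity. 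The real work is to organise a large edge-disjoint family of short cycles into $t$ edge-disjoint such subgraphs: if at least $t$ of the cycles are balanced we take those directly, and otherwise we must pair up unbalanced ones that share a vertex (or are joined by a short path, using connectivity and density of $G$), all the while keeping the total edge count within budget and bookkeeping the nullity via the frame-matroid rank formula $r_M(E(H))=|V(H)|-(\text{number of balanced components of }H)$. Fitting this merging step inside the length bound is the technical heart of the argument.
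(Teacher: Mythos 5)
Your reduction to Lemma~\ref{disjointcircuits} applied directly to $G$ works only in the balanced case, and you yourself flag the unbalanced case as unfinished (``the technical heart of the argument''). That is precisely the gap: when the short cycles produced by the lemma are unbalanced they are independent in $M$, and your proposed repair --- take $2t$ cycles and merge unbalanced ones via shared vertices or ``short'' connecting paths --- is not carried out and does not fit the stated bounds as sketched. Two concrete problems: (i) the budget does not close, since in the unbalanced case $n=|V(G)|=r(M)$ and $2t$ cycles of the maximal allowed length $\tfrac{2\log n}{\log(1+\beta)}$ already consume the full allowance $\tfrac{4t\log r(M)}{\log(1+\beta)}$, leaving no room for any connecting-path edges, and nothing in the hypotheses forces distinct short cycles to share vertices (they can be pairwise far apart, e.g.\ small unbalanced cycles strung along one long cycle); (ii) the rank hypothesis $r(M)\ge\tfrac{1}{q-1}\bigl(\tfrac{4t}{\beta\log(1+\beta)}\bigr)^2$ is calibrated for extracting $t$ cycles from a graph on roughly $(q-1)r(M)$ vertices, not $2t$ cycles from $G$ itself, so for $q\ge 3$ you cannot even invoke Lemma~\ref{disjointcircuits} with $2t$ in place of $t$. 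So the proposal identifies the right obstacle but does not overcome it.

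For comparison, the paper sidesteps the balanced/unbalanced dichotomy entirely by a covering construction: after resigning so that a spanning tree $T$ has all labels $1$, it forms the graph $G^+$ on $V(G)\times\bF^\times$ consisting of $q-1$ copies of $T$, with each non-tree edge of sign $\gamma$ joining the copy labelled $1$ to the copy labelled $\gamma$. Every cycle of $G^+$ projects to either a balanced cycle of $(G,\Sigma)$ or a connected subgraph of minimum degree $2$ that is not a cycle, hence to a dependent set by Proposition~\ref{bicycledependent}; the tree copies inflate the edge count enough that Lemma~\ref{disjointcircuits} applies to $G^+$ with the stated hypotheses, the factor $q-1$ in the vertex count being exactly why the rank bound carries the $\tfrac{1}{q-1}$ and why $\log|V(G^+)|\le 2\log r(M)$ (using $r(M)\ge|\bF|$). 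If you want to salvage your route, you would need either an argument that unbalanced short cycles can be linked by paths of total length within the (tightened) budget --- which requires strengthening the hypotheses or the cycle-length bound --- or a device like the paper's $G^+$ that manufactures dependent sets directly.
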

  \begin{proof}
    Let $A$ be an $\bF$-frame matrix generating $M$ and let $(G,\Sigma)$ be a graph representation of $A$. Pick a spanning tree $T$ of $G$. By repeatedly resigning, we may assume that the edges of $T$ have sign $1$. Note that $|E(T)| \geq r(M) - 1$.
    
     Let $G^+$ denote the undirected graph with vertex set $V(G) \times \bF^\times$ and edge set 
     \begin{align*}
      & \{((\delta^{-}(e),\gamma),(\delta^{+}(e),\gamma)): e \in T, \gamma \in \bF\}\\
      \cup & \{((\delta^-(e), 1), (\delta^+(e), \sign(e))): e \in E(G)\setminus T\};
     \end{align*}
      that is, we take $q-1$ copies of $T$, and each directed edge $(u,v)$ not in $T$ with sign $\gamma$ connects the copy of $T$ corresponding to $1$ with the copy of $T$ corresponding to $\gamma$.
It is easy to check that each cycle of $G^+$, by projection onto the first coordinate, corresponds to either a balanced cycle of $(G,\Sigma)$, or a subgraph of $G$ of minimum degree 2 that is not a cycle. It follows from Proposition~\ref{bicycledependent} that every cycle of $G^+$ is dependent in $M$. Now,
\begin{align*}
    |E(G^+)| & = (q-1)|E(T)| + |M| - |E(T)| \geq (q-2)(r(M) - 1) + |M|\\
    & \ge (q-1)(1+\beta)r(M) + \beta r(M) - (q-2)\\
    & \ge (1+\beta)|V(G^+)| - (q-1)(1+\beta) + \beta r(M) - q + 2\\
    &  \ge (1+\beta)|V(G^+)|,  
\end{align*}

     where we use $|E(T)| \geq r(M)-1$ first, $|M| \geq (1+q\beta)r(M)$ second, $|V(G^+)| \le (q-1)(r(M)+1)$ third, and $r(M) \ge (2q-3)\beta^{-1}+1-q$ last. By our lower bounds for $n = |V(G+)| \geq (q-1)r(M)$ and Lemma~\ref{disjointcircuits}, the graph $G^+$ contains $t$ pairwise disjoint cycles, each of size at most $\frac{2\log n}{\log(1+\beta)} \le \frac{4 \log r(M)}{\log(1+\beta)}$. Each of these cycles of $G$ is dependent in $M$, and it is thus easy to see that their union satisfies the result. 
  \end{proof}
  
  \begin{lemma}\label{nearframe}
    Let $k \in \bZ^+$, $\beta \in \bR^+$, and $\bF$ be a finite field. There exists $c \in \bZ$ such that if $M$ is a rank-$r$ $\bF$-represented matroid with $r > 2$ and $|M| \ge (1+ \beta)r$ and there is an $\bF$-represented, connected frame matroid $N$ satisfying $\dist(M,N) \le k$, then $g(M) \le c \log r$.
  \end{lemma}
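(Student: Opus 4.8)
The plan is to produce a small circuit of $M$ by importing one from the nearby frame matroid $N$ via Corollary~\ref{framematroidrankdeficiency}, exploiting the fact that lifts and projections barely disturb the rank function. Throughout write $q = |\bF|$.

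First I would record the consequences of $\dist(M,N) \le k$. Since $\dist$ is only defined for matroids on a common ground set, $M$ and $N$ share the ground set $E$, so $|N| = |M| \ge (1+\beta)r$, and since each elementary lift or projection changes the rank of any set by at most one, $r - k \le r(N) \le r+k$. More usefully, fix the matroid $P$ on $E \cup C \cup D$ with $|C|+|D| \le k$, $P \del D \con C = M$, and $P \con D \del C = N$ furnished by the remarks preceding Lemma~\ref{lem:connectedperturb}. For $X \subseteq E$ we have $r_M(X) = r_P(X \cup C) - r_P(C)$ and $r_N(X) = r_P(X \cup D) - r_P(D)$; submodularity of $r_P$ together with $X \cap C = X \cap D = \varnothing$ gives $r_M(X) \le r_P(X)$ and $r_P(X) \le r_N(X) + |D|$, hence the key inequality $r_M(X) \le r_N(X) + k$. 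This is the only place the distance hypothesis is used.

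Next I would set $\beta' = \tfrac{\beta}{2q}$ and $t = k+1$, and choose an integer $r_0$, depending only on $k$, $\beta$, and $\bF$, large enough that whenever $r \ge r_0$ the matroid $N$ satisfies the hypotheses of Corollary~\ref{framematroidrankdeficiency} with parameters $\beta'$ and $t$: the displayed rank lower bound holds because $r(N) \ge r-k$ is then large, and the density bound holds because $|N| = |M| \ge (1+\beta)r \ge (1 + \tfrac12\beta)(r+k) \ge (1 + q\beta')\,r(N)$ once $r \ge (\tfrac2\beta + 1)k$ (using $r(N) \le r+k$ and $q\beta' = \tfrac12\beta$). Taking a connected graph representation of $N$ and applying the corollary yields $X \subseteq E$ with $r_N(X) \le |X| - t$ and $|X| \le \tfrac{4t\log r(N)}{\log(1+\beta')}$. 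By the rank inequality, $r_M(X) \le r_N(X) + k \le |X| - 1$, so $X$ is dependent in $M$ and therefore contains a circuit of $M$; hence, for $r \ge r_0 \ge k$,
\[
  g(M) \le |X| \le \frac{4(k+1)\log(r+k)}{\log(1+\beta')} \le \frac{8(k+1)}{\log(1+\beta')}\log r,
\]
using $\log(r+k) \le \log(2r) \le 2\log r$ for $r > 2$.

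Finally, for the finitely many values $2 < r < r_0$ I would invoke only the trivial bound $g(M) \le r(M) + 1 \le r_0$, valid since $|M| \ge (1+\beta)r > r$. Setting $c = \max\bigl(r_0, \lceil \tfrac{8(k+1)}{\log(1+\beta')}\rceil\bigr)$ and noting $\log r > 1$ for $r > 2$ then gives $g(M) \le c\log r$ in all cases. The conceptual content is the one-line inequality $r_M(X) \le r_N(X) + k$; the only genuine labour is the choice of $r_0$, and I expect checking that the numerical hypotheses of Corollary~\ref{framematroidrankdeficiency} survive the passage from $(M,\beta)$ to $(N,\beta')$ to be the fiddliest point.
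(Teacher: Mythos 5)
Your proposal is correct and follows essentially the same route as the paper: translate $\dist(M,N)\le k$ into the rank inequality $r_M(X)\le r_N(X)+k$ via the common perturbation matroid, apply Corollary~\ref{framematroidrankdeficiency} to $N$ with a shrunken density parameter and $t=k+1$ to find a set of rank deficiency $k+1$, and absorb the finitely many small ranks into the constant. If anything you are more careful than the paper at the one fiddly point: your choice $\beta'=\beta/(2q)$ together with the explicit check $|N|\ge(1+q\beta')r(N)$ for $r\ge(\tfrac{2}{\beta}+1)k$ verifies the density hypothesis of the corollary, which the paper's choice $\beta'=\tfrac12\beta$ glosses over.
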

  \begin{proof}
    Let $\beta' = \tfrac{1}{2}\beta$ and $c \ge \max\Big((2q-3)\beta^{-1}+1-q$, $|\bF|$, $\left(\tfrac{4(k+1)}{\beta'\log(1+\beta')}\right)^2$, $(1+\beta')e^4\Big)+k$ be an integer so that $4(k+1)\log(x+k) \le c \log(1+\beta')\log x$ for all $x \ge c$. Since $\beta' > 0$ we know that $M$ has a circuit, so $g(M) \le r+1$. If $r \le c-1$ then $g(M) \le r+1 \le c \le c \log r$, so the result holds; we may thus assume that $r \ge c$.
    
    Let $M^+$ be a matroid and $C,D \subseteq E(M^+)$ be sets of size at most $k$ so that $M^+ \con C \del D = M$ and $M^+ \con D \del C = N$. Since each $Y \subseteq E(M)$ satisfies $r_{M^+}(Y) \ge r_M(Y) \ge r_{M^+}(Y) - r_{M^+}(C) \ge r_{M^+}(Y) - k$ and a similar statement holds for $N$, we have  $|r_M(Y) - r_{N}(Y)| \le k$ for each $Y \subseteq E(M)$. In particular, $r+k \ge r(N) \ge r - k \ge c-k$. By choice of $c$, Corollary~\ref{framematroidrankdeficiency} implies there is a set $X \subseteq E(N)$ with $r_{N}(X) \le |X|-(k+1)$ and $|X| \le \frac{4(k+1)\log(r(N))}{\log(1+\beta')} \le \tfrac{4(k+1)\log(r+k)}{\log(1+\beta')} \le c \log r$. Now $r_M(X) \le r_{N}(X) + k < |X|$, so $X$ contains a circuit of $M$ and therefore $g(M) \le c \log r$, as required. 
  \end{proof}
  
  We now deal with the case when $M$ is close to the dual of a frame matroid. Here we show that the girth is bounded above by a constant.
  
  \begin{lemma}\label{nearcoframe}
    Let $k \in \bZ^+$, $\beta \in \bR^+$, and $\bF$ be a finite field. There exists $c \in \bZ$ so that, if $M$ is a nonempty $\bF$-represented matroid such that  $|M| \ge (1+\beta) r(M)$ and there is an $\bF$-represented frame matroid $N$ with $\dist(M^*,N) \le k$ and connected graph, then $g(M) \le c$.  
  \end{lemma}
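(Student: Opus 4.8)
The plan is to exploit the fact that, unlike in Lemma~\ref{nearframe}, the hypothesis $|M|\ge(1+\beta)r(M)$ here severely constrains $N$: together with $\dist(M^*,N)\le k$ it forces $N$ to have only linearly many elements in terms of its rank, so its connected graph representation has bounded average degree and the logarithmic Moore-type estimate behind Lemma~\ref{disjointcircuits} collapses to a constant. First I would dispose of small ranks. Since $\beta>0$ we have $|M|>r(M)$, so $M$ has a circuit and $g(M)\le r(M)+1$; hence it suffices to produce constants $c_0=c_0(k,\beta,|\bF|)$ and $c=c(k,\beta,|\bF|)$ such that $g(M)\le c$ whenever $r(M)\ge c_0$.

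Next I would translate the hypothesis on $M$ into a statement about the graph of $N$. From $r(M^*)=|M|-r(M)$ and $r(M)\le\tfrac{|M|}{1+\beta}$ we get $r(M^*)\ge\tfrac{\beta}{1+\beta}|M|$, and since $\dist(M^*,N)\le k$ gives $|r_{M^*}(Y)-r_N(Y)|\le k$ for all $Y\subseteq E$ (exactly as in the proof of Lemma~\ref{nearframe}), while $|N|=|M|$, this yields $|N|\le\tfrac{1+\beta}{\beta}(r(N)+k)$. Let $(G,\Sigma)$ with $G=(V,E)$ be the given connected graph representation of $N$ and let $A$ be the corresponding frame matrix; then $A$ has row set $V$, so $r(N)\le|V|$, while a spanning tree of $G$ is a forest and hence independent in $N$, so $|V|\le r(N)+1$. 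Combining these, $|E|=|N|\le\tfrac{1+\beta}{\beta}(|V|+k)$, so $G$ has average degree at most a constant $D_0=D_0(k,\beta)$, and a standard counting argument shows that more than half the vertices of $G$ have degree at most $D:=2D_0$.

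The core of the argument is then a single observation about the matrix $A$. Assuming $r(M)$ is large enough (which, by the above rank comparisons, makes $r(N)$ and hence $|V|$ large), choose vertices $v_1,\dots,v_{2k+2}$ of $G$, each of degree at most $D$, and let $X\subseteq E$ be the union of the sets of edges incident with these vertices, so $|X|\le(2k+2)D$. By the definition of a graph representation, every column of $A$ indexed by an edge outside $X$ is zero in each of the rows $v_1,\dots,v_{2k+2}$, so $r_N(E\setminus X)=\rank(A[E\setminus X])\le|V|-(2k+2)\le r(N)-(2k+1)$. Applying $|r_{M^*}(Y)-r_N(Y)|\le k$ to $Y=E\setminus X$ and to $Y=E$ gives $r_{M^*}(E\setminus X)\le r(N)-(k+1)<r(N)-k\le r(M^*)$; thus $E\setminus X$ is non-spanning in $M^*$, so $X$ contains a cocircuit of $M^*$, that is, a circuit of $M$. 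Hence $g(M)\le|X|\le(2k+2)D$, and one takes $c=\max\{(2k+2)D,\,c_0+1\}$, using $g(M)\le r(M)+1\le c_0+1$ in the small-rank case.

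I do not expect a genuine obstacle; the work is all bookkeeping. The step most likely to need care is choosing $c_0$ large enough that ``more than half of $V$'' really does contain $2k+2$ vertices of degree at most $D$, and verifying that $r(M)$ being large is equivalent (up to additive constants depending on $k$ and multiplicative constants depending on $\beta$) to $r(N)$ and $|V|$ being large; the remaining ingredients — that the matrix underlying a graph representation has row set the vertex set, and that forests are independent in frame matroids, so $|V|\le r(N)+1$, and that loops and parallel edges are harmless in the degree count — are routine. The conceptual point is that here $N$ is ``thin'' rather than ``thick'', so no $\log r$ term survives and $g(M)$ is bounded outright, which is exactly why this case is easier than Lemma~\ref{nearframe}.
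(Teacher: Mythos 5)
Your proposal is correct and takes essentially the same route as the paper: bound the average degree of the connected graph representation of $N$ by a constant depending on $k$ and $\beta$, select $2(k+1)$ low-degree vertices, and use the boundedly many edges incident with them as a set whose removal forces at least $2(k+1)$ zero rows in the frame matrix, hence drops $r(N)$ by more than $2k$, which transfers (with error $k$) to a rank drop in $M^*$ and yields a small cocircuit of $M^*$, i.e.\ a small circuit of $M$. The only differences are bookkeeping: you dispose of the degenerate case via a bound on $r(M)$ rather than on $|M|$, obtain $|V(G)|\le r(N)+1$ from independence of a spanning tree rather than from choosing a frame matrix with at most $r(N)+1$ rows, and use a per-vertex degree bound on $2k+2$ vertices where the paper bounds the sum of the $2(k+1)$ smallest degrees.
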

  \begin{proof}
    We may assume that $\beta \le 1$. Let $c = \lceil 12 \beta^{-1}(3k+1) \rceil$. If $|M| \le c$ then the result clearly holds, so we will assume otherwise. Let $M^+$ be a matroid and $C,D \subseteq E(M^+)$ be sets of size at most $k$ so that $M^+ \con C \del D = M^*$ and $M^+ \con D \del C = N$. As before, we have $|r_{M^*}(Y)-r_N(Y)| \le k$ for each $Y \subseteq E(M^*)$. Let $A$ be an $\bF$-frame matrix generating $N$ with $r(N)$ rows (or $r(N)+1$ rows if $|\bF| = 2$), and $(G,\Sigma)$ be an $\bF^{\times}$-labelled graph associated with $N$.  We have $|M^*| = |M| \ge (1+\beta)r(M) = (1+\beta)(|M^*| - r(M^*))$, giving $|M^*| \le (1 + \beta^{-1})r(M^*) \le 2\beta^{-1}r(M^*)$.  
    
    Now $k \le r(M^*)-k \le r(N) \le |V(G)|+1 \le 2|V(G)|$, so 
    \[|E(G)| = |N| = |M^*| \le 2\beta^{-1}(|V(G)|+k) \le 6\beta^{-1}|V(G)|.\] 
    Therefore $G$ has average degree at most $12\beta^{-1}$. If $|V(G)| < 2(k+1)$, then $2(k+1)-1 \ge r(N) \ge r(M^*)-k$ so $r(M^*) \le 3k+1$. This gives $|M^*| \le 2\beta^{-1}(3k+1) < c$, a contradiction.  Therefore $|V(G)| \ge 2(k+1)$, so there is a collection of $2(k+1)$ vertices of $G$ whose degrees sum to at most $12\beta^{-1}(k+1)$, so there is a set $F \subseteq E(G)$ such that $|F| \le 12\beta^{-1}(k+1) \le c$ and $r(N \del F) \le r(N)+1 - 2(k+1)$ (since $A$ has at most $r(N)+1$ rows and $A[E(N)-F]$ has at least $2(k+1)$ zero rows). Now
    \[r(M^* \del F) \le r(N \del F) + k \le r(N) - 2k -1 +k \le r(M^*) -1.\]
    Thus $F$ contains a cocircuit of $M^*$, which is a circuit of $M$, giving $g(M) \le c$. 
    
  \end{proof}
  
  \section{The Main Result} 
  
  The following theorem implies Theorem~\ref{main}.
  
  \begin{theorem}
    Let $\bF$ be a finite field. If $\cM$ is a minor-closed class of $\bF$-represented matroids, then $\cM$ is asymptotically good if and only if $\cM$ contains all projective geometries over $\Fprime$.
  \end{theorem}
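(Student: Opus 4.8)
The plan is to prove the two implications separately; the content is entirely in the ``only if'' direction. For the ``if'' direction: if $\cM$ is minor-closed and contains every $\PG(n-1,p)$, then---since every $\Fprime$-represented matroid is projectively equivalent to a minor of a projective geometry over $\Fprime$ of sufficiently large rank---$\cM$ contains all $\Fprime$-represented matroids, and that class is asymptotically good (by the standard probabilistic construction), so $\cM$ is as well.

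For the ``only if'' direction I would argue by contradiction: suppose $\cM$ is minor-closed and asymptotically good but omits some projective geometry over $\Fprime$. First I would apply Theorem~\ref{big} to obtain an integer $k$ such that every vertically $k$-connected $M\in\cM$ satisfies $\dist(M,N)\le k$ or $\dist(M^*,N)\le k$ for some $\bF$-represented frame matroid $N$, and then Lemma~\ref{connreduction} with $t=2k+3$ to obtain reals $\alpha,\beta\in\bR^+$ (with $\alpha<1$, say) and an $(\alpha,\beta)$-good sequence $M_0,M_1,\dotsc$ of vertically $t$-connected matroids in $\cM$. The goal is then to show $g(M_n^*)=O(\log|M_n|)$ for all large $n$, contradicting $g(M_n^*)\ge\beta|M_n|\ge\beta n$.

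So fix a large $n$ and write $M=M_n$. Since $M^*$ has a circuit of size at most $r(M^*)+1$, the inequality $g(M^*)\ge\beta|M|$ gives $r(M^*)\ge\beta|M|-1$ (in particular $r(M^*)>2$ for $n$ large), and $r(M)\ge\alpha|M|$ gives $r(M^*)=|M|-r(M)\le(1-\alpha)|M|$, so $|M^*|=|M|\ge(1+\beta_0)r(M^*)$ with $\beta_0:=\alpha/(1-\alpha)>0$. By Theorem~\ref{big}, either $\dist(M,N)\le k$ or $\dist(M^*,N)\le k$ for a frame matroid $N$. If $\dist(M,N)\le k$, then $N$ is vertically $(t-2k)=3$-connected by Lemma~\ref{lem:connectedperturb}, hence has a connected graph representation by Corollary~\ref{cor:graphconn}; since $\dist((M^*)^*,N)\le k$ and $|M^*|\ge(1+\beta_0)r(M^*)$, Lemma~\ref{nearcoframe} applied to $M^*$ (with $\beta_0$ in place of $\beta$) gives $g(M^*)\le c$ for a constant $c=c(k,\beta_0,\bF)$, contradicting $g(M^*)\ge\beta n$. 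If instead $\dist(M^*,N)\le k$, then (distance being invariant under dualising both matroids) $\dist(M,N^*)\le k$, so $N^*$ is vertically $3$-connected by Lemma~\ref{lem:connectedperturb}; I would then show $N$ has a connected graph representation, after which Lemma~\ref{nearframe} applied to $M^*$ (using $r(M^*)>2$ and $|M^*|\ge(1+\beta_0)r(M^*)$) gives $g(M^*)\le c'\log r(M^*)\le c'\log|M|$, again contradicting $g(M^*)\ge\beta n$.

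The one point needing real care is producing the connected graph representation of $N$ in the last case. Take any graph representation $(G,\Sigma)$ of $N$ with at most $r(N)+1$ vertices (Proposition~\ref{frameexist}); its components express $N$ as a direct sum of frame matroids, and since $N^*$ is vertically $2$-connected the dual direct-sum decomposition of $N^*$ has at most one summand of positive rank. Hence every component of $G$ except one ``main'' component is a tree or is edgeless, and each such component can be absorbed into the main component by identifying one of its vertices with a vertex of the main component---a move that, because the rows of the frame matrix within a tree component are linearly dependent, does not change the frame matroid. Iterating yields a connected graph representation of $N$, as required. This detour through the dual---needed because the structure theorem only guarantees \emph{some} frame matroid near $M^*$ while Lemma~\ref{nearframe} demands a connected one---is, I expect, the main obstacle; everything else is assembling the pieces already prepared in the earlier sections.
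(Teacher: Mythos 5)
Your proof follows the paper's argument in all essentials: the ``if'' direction is the same one-line observation, and the ``only if'' direction is exactly the paper's scheme of applying Theorem~\ref{big}, passing to the vertically $(2k+3)$-connected members via Lemma~\ref{connreduction}, and then applying Lemmas~\ref{nearframe} and~\ref{nearcoframe} to $M^*$ (with $\beta_0=\alpha/(1-\alpha)$, which is the paper's $\beta$) to contradict the lower bound on $g(M^*)$. The only place you genuinely diverge is the case $\dist(M^*,N)\le k$, where you build a connected graph representation of $N$ by hand; the paper instead asserts in both cases that $N$ is vertically $3$-connected (Lemma~\ref{lem:connectedperturb}) and invokes Corollary~\ref{cor:graphconn}, and your detour through $N^*$ is a reasonable reaction to the fact that vertical connectivity is not a self-dual notion.

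That hand-built step, however, contains a real error. Vertical $2$-connectivity of $N^*$ only tells you that every component $G_i$ of $G$ other than the main one induces a \emph{free} restriction of $N$, i.e.\ $r_N(E(G_i))=|E(G_i)|$. Such a component need not be a tree or edgeless: it can be connected and unicyclic with its unique cycle unbalanced (for example a single loop whose column has one nonzero entry). For such a component your absorption move fails: identifying one of its vertices with a vertex of an \emph{unbalanced} main component changes the matroid, since the two unbalanced cycles now meeting at the identified vertex form a dependent set (in matrix terms, merging the rows of $u$ and $w$ kills the vector $e_u-e_w$, which lies in the column span exactly when both components are unbalanced, so the rank drops). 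Your stated justification (``the rows of the frame matrix within a tree component are linearly dependent'') covers only the balanced/tree case. The repair is local and keeps your structure intact: the union of the non-main components is a free restriction of $N$, so simply re-represent it as a pendant path (with arbitrary labels) attached to one vertex of the main component; pendant tree edges lie in no balanced cycle and in no pair of unbalanced cycles, so this connected graph represents $N$, and the rest of your argument then goes through as written.
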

  \begin{proof}
    If $\cM$ contains all projective geometries over $\Fprime$, then $\cM$ contains all $\Fprime$-represented matroids so is clearly asymptotically good. Suppose that $\cM$ does not contain all projective geometries over $\Fprime$ but is asymptotically good. By Theorem~\ref{big}, there is an integer $k$ so that every vertically $k$-connected matroid $M \in \cM$ satisfies $\dist(M,N) \le k$ or $\dist(M^*,N) \le k$ for some $\bF$-represented frame matroid $N$; moreover, $N$ is vertically $3$-connected by Lemma \ref{lem:connectedperturb}, so by Corollary \ref{cor:graphconn} we may assume the associated graph is connected. Let $\cM_k$ denote the class of vertically $(2k+3)$-connected matroids in $M$. Note that every such matroid is also vertically $k$-connected.

    By Lemma~\ref{connreduction}, the class $\cM_k$ is asymptotically good, so $\cM_k$ contains an $(\alpha,\alpha)$-good sequence for some $\alpha \in (0,1)$. Let $\beta = (1-\alpha)^{-1}-1$ and let $c$ be the maximum of the two integers given by Lemmas~\ref{nearframe} and~\ref{nearcoframe} for $\bF$, $\beta$ and $k$. Let $n_0$ be an integer so that $c \log n < \alpha n$ for all $n \ge n_0$. 
    
    There is a matroid $M \in \cM_k$ so that $|M| \ge n_0$, $d(M) = g(M^*) \ge \alpha|M|$ and $r(M) \ge \alpha|M|$. The last inequality gives $|M^*| \ge (1+\beta)r(M^*)$, so by Lemma~\ref{nearframe} or~\ref{nearcoframe} we have $d(M) \le c \log r(M^*) \le c \log |M^*| < \alpha |M^*|$, a contradiction. 
    
    
    
  \end{proof}
\section*{Acknowledgements}
	We thank Navin Kashyap for his careful reading and very useful advice on the manuscript. 

\section*{References}

\newcounter{refs}

\begin{list}{[\arabic{refs}]}
{\usecounter{refs}\setlength{\leftmargin}{10mm}\setlength{\itemsep}{0mm}}


\item\label{alonetal}
N. Alon, S. Hoory and N. Linial, The Moore bound for irregular graphs, Graphs Combin. 18 (2001), 53--57. 

\item\label{kashyap}
N. Kashyap, A decomposition theory for binary linear codes, IEEE Transactions on Information Theory 54 (2008), 3035--3058.

\item\label{ggw}
J. Geelen, B. Gerards and G. Whittle, The highly connected matroids in minor-closed classes, arXiv:1312.5012 [math.CO].

\item\label{ox2}
J. G. Oxley, 
Matroid Theory (2nd edition),
Oxford University Press, New York, 2011.

\item\label{rs}
N. Robertson and P. D. Seymour, Graph Minors. XVI. Excluding a non-planar
graph, J. Combin. Theory Ser. B 89 (2003), 43--76.

\end{list}

\end{document}